\numberwithin{equation}{section}
\renewcommand{\section}{\@startsection {section}{1}{\z@}%
                                   {-3.5ex \@plus -1ex \@minus -.2ex}%
                                   {.5\linespacing}%
                                   {\normalfont\scshape\centering}}
\newtheorem{thm}{Theorem}[section]
\newtheorem{cor}[thm]{Corollary}
\newtheorem{prop}[thm]{Proposition}
\theoremstyle{definition}
\newtheorem{definition}{Definition}[section]
\theoremstyle{remark}
\def\beq#1\eeq{\begin{equation}#1\end{equation}}
 \newcommand{\onto}{\to\mkern-14mu\to}
 \def\rightarrowfill@#1{\m@th\setboxz@h{$#1\relbar$}\ht\z@\z@
   $#1\copy\z@\mkern-6mu\cleaders
   \hbox{$#1\mkern-2mu\box\z@\mkern-2mu$}\hfill
   \mkern-6mu\mathord\rightarrow$}
 \def\leftarrowfill@#1{\m@th\setboxz@h{$#1\relbar$}\ht\z@\z@
   $#1\mathord\leftarrow\mkern-6mu\cleaders
   \hbox{$#1\mkern-2mu\copy\z@\mkern-2mu$}\hfill
   \mkern-6mu\box\z@$}
 \def\B@R#1#2{\raisebox{-.07ex}{$#1#2$}\mkern-6mu}
 \renewcommand{\hbar}{{\mspace{1mu}\mathpalette\B@R{\mathchar'26}h}}
 \newcommand{\Wedge}{\mathord{\wedge}}
 \DeclareMathSymbol{\onto}{\mathrel}{AMSa}{"10}
 \renewcommand{\hbar}{{\mathchar'26\mkern-9muh}}
 \newcommand{\Wedge}{\mathsf{\Lambda}}
\DeclareMathOperator{\sgn}{\mathrm{sgn}}
\newcommand{\C}{\mathcal{C}}
\newcommand{\co}{\mathbb{C}}
\newcommand{\cs}{\mbox{\upshape C}\ensuremath{{}^*}}
\newcommand{\rar}{\rightarrow}
\newcommand{\R}{\mathbb{R}}
\newcommand{\Z}{\mathbb{Z}}
\newcommand{\into}{\hookrightarrow}
\DeclareMathOperator{\rk}{rk}
\newcommand{\inner}{\mathbin{\raise1.5pt\hbox{$\lrcorner$}}}
\DeclareMathOperator{\tr}{tr}
\newcommand{\abs}[1]{\lvert#1\rvert}
\newcommand{\Kahler}{K\"ahler}
\newcommand{\A}{\mathcal{A}}
\newcommand{\Or}{\mathcal O}
\newcommand{\Norm}[1]{\left\|#1\right\|}
\newcommand{\norm}[1]{\lVert#1\rVert}
\DeclareMathOperator{\Mat}{Mat}
\newcommand{\N}{\mathbb N}
\newcommand{\Abs}[1]{\left|#1\right|}
\newcommand{\Li}{\mathcal L}
\DeclareMathOperator{\Tr}{Tr}
\DeclareMathOperator{\ch}{ch}
\newcommand{\deRham}{de\,Rham}
\newcommand{\Lh}{L^2_{\mathrm{hol}}}
\DeclareMathOperator{\curv}{curv}
\DeclareMathOperator{\td}{td}
\DeclareMathOperator{\Pic}{Pic}
\newcommand{\RR}{\mathcal{R}}
\newcommand{\M}{\mathcal{M}}
\begin{document}

\title{Deformation Quantization and Irrational Numbers}
\author{Eli Hawkins} 
\address{Eli Hawkins\\ Department of Mathematics\\ The University of York\\ UK}
\email{mrmuon@mac.com}

\author{Alan Haynes}
\address{Alan Haynes\\ Department of Mathematics\\ The University of York\\ UK}
\curraddr{Department of Mathematics\\ The University of Bristol\\ UK}
\email{alanhaynes@gmail.com}

\subjclass[2010]{Primary 11J70, 46L65; Secondary 11J54, 53D50}

\begin{abstract}
Diophantine approximation is the problem of approximating a real number by rational numbers. We propose a version of this in which the numerators are approximately related to the denominators by a Laurent polynomial. Our definition is motivated by the problem of constructing strict deformation quantizations of symplectic manifolds. We show that this type of approximation exists for any real number and also investigate what happens if the number is rational or a quadratic irrational.
\end{abstract}

\maketitle

\section{Introduction}
Let $\M$ be a  manifold with symplectic form $\omega\in\Omega^2(\M)$.
The starting point of geometric quantization is a complex line bundle $L\to\M$ (with a Hermitian inner product and a compatible connection) whose curvature equals $\omega$.

This can be used to construct a Hilbert space and some correspondence between operators and functions on $\M$. If $\M$ is  the phase space of some classical mechanical system, then these are supposed to be the state space of quantum mechanics and a correspondence between quantum and classical observables. However, the rules for how quantum and classical physics should correspond \cite{ber,haw11,rie11} are stated in terms of the classical limit in which ``Planck's constant'' $\hbar$ approaches $0$.

Changing Planck's constant is equivalent to rescaling the symplectic form, and this can be achieved by taking tensor powers of the line bundle. The curvature of $L^{\otimes k}$ is $k\omega$.

Using these tensor powers and identifying $\hbar=\frac1k$, a strict deformation quantization can be constructed. In particular, the commutator of operators corresponds approximately to $i\hbar$ times the Poisson bracket, which is defined by treating $\omega$ as a matrix and inverting it.

Unfortunately, this procedure isn't always possible. The line bundle $L$ only exists if the symplectic form $\omega$ satisfies an integrality condition --- namely, that the integral of $\omega$ over any closed surface must be an integral multiple of $2\pi$.

What can we do if $\omega$ violates this condition? In particular, what if $\omega$ is not even proportional to an integral form? The solution is to take some more general sequence of line bundles, rather than just tensor powers of a fixed line bundle. The sequence of values of $\hbar$ may also be very different.

The point is that the quantum-classical correspondence only refers to the classical limit, so the curvatures of the line bundles only need to \emph{approximate} multiples of $\omega$.

The difficult part of this is topological. The Chern classes of these line bundles are integral cohomology classes, so they lie on a lattice inside $H^2(\M,\R)$. The condition on the classical limit means that these lattice points must converge toward a given line in $H^2(\M,\R)$, namely, the set of multiples of $[\omega]$.

If $H^2(\M,\R)\cong \R^2$, then this is a matter of approximating a real number by rational numbers --- Diophantine approximation. To construct a strict deformation quantization of $\M$, we need a Diophantine approximation to the ratio between the components of $[\omega]$.

This would be enough to satisfy some definitions of strict deformation quantization, but those definitions do not impose very good behavior in the classical limit. In particular the Jacobi identity for the Poisson bracket is an unnatural and unnecessary condition unless there is some stronger condition on the classical limit. One of us \cite{haw11} has proposed a definition of ``order $N$ strict deformation quantization'' where $2\leq N \leq \infty$. This leads to a stronger condition on the sequence of Chern classes and a more restrictive version of Diophantine approximation.

The purpose of this paper is to study this kind of approximation.

The above motivation was based on the standard construction of geometric quantization, but the modified version of geometric quantization in \cite{haw8} only requires a weaker integrality condition: The integral of $\omega$ over any $S^2\subset\M$ should be a multiple of $2\pi$.

On the other hand, it appears that some sort of integrality condition is necessary from first principles, not just for some constrictions. In \cite{haw11}, one of us proved this for the symplectic $S^2$. In \cite{fed3}, Fedosov proved an integrality condition for ``asymptotic operator representations''.

\subsection{Outline}
We begin in Section~\ref{Rational} by giving a definition for ``order $N$ rational approximation'' to a real number $\alpha\in\R$ and proving that such a thing always exists. 

In Section~\ref{Quantization}, we motivate this definition in two ways from quantization. First, using Proposition~\ref{Toeplitz2}, we show how it arises as a necessary condition in a construction of a deformation quantization. Then, using Theorem~\ref{Asymptotic}, we show that it arises (in the case $N=\infty$) as a necessary condition for the existence of a deformation quantization.

In Section~\ref{Examples}, we examine more precisely what happens if the real number is actually rational or satisfies a quadratic equation with integer coefficients. Finally, in Section~\ref{Conclusions}, we discuss unanswered questions.

\section{Rational Approximation}
\label{Rational}
\subsection{Definition}
Diophantine approximation is one of the oldest topics in number theory. Given a number $\alpha\in\R$, the problem is to approximate $\alpha$ by rational numbers; that is, we need a set of pairs of integers $(r,s)$, such that 
\beq
\label{Diophantine1}
\frac{r}{s} \to \alpha
\eeq
as $s$ increases. This is a rather weak condition, so one usually considers the stronger condition,
\beq
\label{Diophantine2}
r-s\alpha \to 0 .
\eeq
As we shall explain in Section~\ref{Quantization}, the problem of deformation quantization motivates us to define a more restrictive condition:
\begin{definition}
\label{Rational definition}
An \emph{order $N\in\N$ rational approximation} of $\alpha\in\R$ is an infinite subset $\RR\subset \Z^2$, such that there exist real numbers $\gamma_1,\dots,\gamma_N\in\R$ for which
\[
\frac{r}{s} = \alpha + \gamma_1 s^{-1} + \gamma_2 s^{-2} + \dots + \gamma_N s^{-N} + o\left([\abs r + \abs s]^{-N}\right) ,\\
\]
as $\abs r + \abs{s}\to\infty$, for $(r,s)\in\RR$. We will refer to the numbers $s$ as the {\em denominators}. An \emph{infinite order rational approximation} of $\alpha$ is a subset $\RR$ satisfying this condition for any $N$.
\end{definition}
There is nothing special about the expression $\abs r + \abs s$ here. It is simply the easiest norm on $\R^2$ to write down. Any other norm would give an equivalent definition.

It's easy to see that the expansion coefficients $\gamma_1,\dots,\gamma_N$ are uniquely determined by $\RR$.
In the case of infinite order, this is an asymptotic expansion of $r$ as a function of $s$, although $r$ need not actually be a function of $s$.

In terms of this definition, the condition \eqref{Diophantine1} is the definition of an order $0$ rational approximation, and \eqref{Diophantine2} means an order $1$ rational approximation with $\gamma_1=0$.

\subsection{Continued fractions}
In our investigation of finite and infinite order rational approximations we will use continued fractions. Every irrational real number $\alpha$ has a
\emph{simple continued fraction expansion}
\begin{align*}
\alpha = a_0 + \cfrac{1}{a_1+
            \cfrac{1}{a_2+
             \cfrac{1}{a_3+\dotsb}}}=:[a_0; a_1, a_2, a_3, \dots ],
\end{align*}
where $a_0$ is an integer and $a_1, a_2, \dots $ is a sequence of positive integers. The integers $a_0,a_1,\ldots$ are uniquely determined by
$\alpha$ and are called the \emph{partial quotients} in this expansion. The rational numbers
\[
\frac{p_n}{q_n} :=[a_0;a_1,\ldots ,a_n],\quad n\geq 0
\]
are called the \emph{principal convergents} to $\alpha$. We will always assume that $q_n>0$ and $\gcd(p_n,q_n)=1$ for each $n$. Finally for $n\ge 0$ we define the \emph{complete quotients} in the continued fraction expansion of $\alpha$ by
\[
\zeta_n:=[a_n;a_{n+1},\ldots],
\]
and we also define the quantities
\[
\xi_n:=\frac{q_{n-1}}{q_n}.
\]
The most basic facts about continued fractions are that
\beq\label{cffact1}
p_{n+1}=a_{n+1}p_n+p_{n-1},\qquad q_{n+1}=a_{n+1}q_n+q_{n-1},\quad\text{ and}
\eeq
\beq\label{cffact2}
\frac{1}{2q_nq_{n+1}}\le \left|\alpha-\frac{p_n}{q_n}\right|\le\frac{1}{q_nq_{n+1}}.
\eeq
In our applications we will also use the facts that
\beq\label{cffact3}
\alpha-\frac{p_n}{q_n}=\frac{(-1)^n}{q_n^2(\zeta_{n+1}+\xi_n)}\quad\text{ and}
\eeq
\beq\label{cffact4}
\xi_n=[0;a_n,a_{n-1},\ldots ,a_1].
\eeq
Proofs of all of these facts can be found in \cite{RockettSzusz1992}. The following proposition gives a representation of natural numbers in terms of denominators of convergents to $\alpha$. This is known as the \emph{Ostrowski expansion of a natural number} with respect to $\alpha$.
\begin{prop}\label{Ostrowski1}
Suppose $\alpha\in\R$ is irrational. Then for every $s\in\N$ there is a unique integer $M\ge0$ and a unique sequence $\{c_{n+1}\}_{n=0}^\infty$ of integers such that $q_M\le s< q_{M+1}$ and
\beq\label{Ostexp1}
s=\sum_{n=0}^\infty c_{n+1}q_n,
\eeq
with $0\le c_1<a_1$ and $0\le c_{n}\le a_{n}$  for all $n\ge 1$,
\[
c_{n+1}=a_{n+1} \implies c_n=0 ,
\]
and
\[
c_{n+1}=0 \quad \text{for all}  \quad n>M.
\]
\end{prop}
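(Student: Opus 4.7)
The plan is a greedy algorithm for existence together with a ``digit-sum bound'' for uniqueness.

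For existence I would induct on $s\in\N$. Given $s$, let $M$ be the largest integer with $q_M\leq s$, so that automatically $q_M\leq s<q_{M+1}$. Set $c_{M+1}:=\lfloor s/q_M\rfloor$, which satisfies $c_{M+1}\geq 1$. The recursion $q_{M+1}=a_{M+1}q_M+q_{M-1}$ together with $q_{M-1}\leq q_M$ gives $s<q_{M+1}\leq(a_{M+1}+1)q_M$, so $c_{M+1}\leq a_{M+1}$. Put $s':=s-c_{M+1}q_M\in[0,q_M)$; since $s'<s$, the inductive hypothesis (or the trivial case $s'=0$) supplies an Ostrowski expansion of $s'$. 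Prepending the digit $c_{M+1}$ at position $M$ yields one for $s$, and the only digit condition that is not automatic is $c_{M+1}=a_{M+1}\Rightarrow c_M=0$: in that case $s'=s-a_{M+1}q_M<q_{M+1}-a_{M+1}q_M=q_{M-1}$, so the coefficient of $q_{M-1}$ in the expansion of $s'$ must vanish. A similar care at the bottom of the recursion, using $q_0=1$ and $q_1=a_1$, enforces $0\leq c_1<a_1$.

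For uniqueness the key step is the strict bound
\[
\sum_{n=0}^{N}b_{n+1}q_n<q_{N+1}
\]
for any nonnegative integer sequence $(b_{n+1})$ satisfying the Ostrowski digit conditions and vanishing for $n>N$. I would prove this by induction on $N$, splitting on whether $b_{N+1}=a_{N+1}$ (which forces $b_N=0$ and lets me use the inductive bound $\sum_{n=0}^{N-2}b_{n+1}q_n\leq q_{N-1}-1$, giving a total at most $a_{N+1}q_N+q_{N-1}-1<q_{N+1}$) or $b_{N+1}\leq a_{N+1}-1$ (where the inductive bound $\sum_{n=0}^{N-1}b_{n+1}q_n\leq q_N-1$ gives a total at most $a_{N+1}q_N-1<q_{N+1}$). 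This estimate immediately identifies $M$ as the largest index with $c_{M+1}\neq 0$ (any expansion whose top nonzero digit sat at a lower index would be too small to reach $s\geq q_M$), and subtracting two supposed expansions and examining the largest index of disagreement forces equality of all the digits.

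The main obstacle is really the bookkeeping imposed by the asymmetric conditions ``$0\leq c_1<a_1$'' and ``$c_{n+1}\leq a_{n+1}$ with equality forcing $c_n=0$''. Both are exactly what is needed to make the estimate $\sum b_{n+1}q_n<q_{N+1}$ strict; without the forcing rule one could always trade $a_{n+1}q_n+q_{n-1}$ for $q_{n+1}$, and the expansion would fail to be unique. Once the strict bound is in hand the rest is a routine induction.
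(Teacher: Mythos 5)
The paper does not prove Proposition~\ref{Ostrowski1} itself; it simply cites \cite[Chapter~3]{RockettSzusz1992}, so there is no in-paper argument to compare against. Your proof is correct and is essentially the standard textbook argument: a greedy recursion for existence, plus the strict estimate $\sum_{n=0}^{N}b_{n+1}q_n<q_{N+1}$ as the key lemma for uniqueness. The points you flag as needing care are the right ones and do check out: the bound $c_{M+1}\le a_{M+1}$ uses $q_{M-1}\le q_M$, which holds with the convention $q_{-1}=0$ even when $M=0$ or $a_1=1$; the strict inequality $0\le c_1<a_1$ appears because the recursion bottoms out with a remainder $s'<q_1=a_1$, so $c_1=s'$; and in the digit-sum induction the base case $N=0$ and the case $N=1$ with $b_2=a_2$ (where $\sum_{n=0}^{N-2}$ is the empty sum) must be read off directly, but are immediate. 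With the strict bound in hand, the fact that $q_M\le c_{M+1}q_M\le s<q_{M+1}$ pins down $M$, and the largest-index-of-disagreement argument closes uniqueness exactly as you describe.
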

We can construct a similar expansion for real numbers. For $n\ge 0$ let
\beq\label{D_ndef}
D_n:=q_n\alpha-p_n.
\eeq
By (\ref{cffact1}) these quantities satisfy the identities
\beq\label{D_nfact1}
a_{n+1}D_n=D_{n+1}-D_{n-1}\quad\text{for}\quad n\ge1,
\eeq
and it is also not difficult to show that
\beq\label{D_nfact2}
D_n=(-1)^n\|q_n\alpha\| \quad\text{for}\quad n\ge 1,
\eeq
where $\norm{\,\cdot\,}$ denotes the distance to the nearest integer. The following proposition provides us with a way of expanding real numbers in terms of the quantities $D_n$. We will call this the {\em Ostrowski expansion of a real number} with respect to $\alpha$.
\begin{prop}\label{Ostrowski2}
Suppose $\alpha\in [0,1)$ is an irrational number with continued fraction expansion denoted as above. For any $\gamma\in [-\alpha,1-\alpha)$ that satisfies
\beq\label{gam rest}
\|s\alpha-\gamma\|>0~\text{ for all }~s\in\Z
\eeq
there is a unique sequence $\{b_{n+1}\}_{n=0}^{\infty}$ of integers such that
\beq\label{Ostexp2}
\gamma=\sum_{n=0}^\infty b_{n+1}D_n,
\eeq
\[
\text{with }~0\le b_1<a_1,\quad 0\le b_{n+1}\le a_{n+1}\ \text{ for } \ n\ge 1,\qquad\text{and}
\]
\[
b_n=0 \quad \text{whenever}\quad b_{n+1}=a_{n+1}\ \text{ for some }n\ge 1.
\]
\end{prop}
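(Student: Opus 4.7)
The plan is to reduce the proposition to a sharp range statement for admissible Ostrowski tails, from which existence and uniqueness both follow almost formally.

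The key step is the following lemma. For each $N\ge 0$, consider sequences $(b_{n+1})_{n\ge N}$ of non-negative integers satisfying $0\le b_{n+1}\le a_{n+1}$, the no-consecutive-maximal rule $b_n=0$ whenever $b_{n+1}=a_{n+1}$, and (when $N=0$) the sharper bound $b_1<a_1$. I claim that the tail $T_N:=\sum_{n\ge N}b_{n+1}D_n$ defines a bijection from the set of such sequences onto a half-open interval $I_N$ of length $|D_{N-1}|$, and that for $N=0$ we have $I_0=[-\alpha,1-\alpha)$. The ingredients are the telescoping consequence of \eqref{D_nfact1},
\[
\sum_{k=N}^\infty a_{k+1}|D_k|=|D_{N-1}|+|D_N|,
\]
which uses $D_n\to 0$ (via \eqref{cffact2}) and the sign pattern $\sgn D_n=(-1)^n$, together with an induction on $N$ to handle the alternating contributions. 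The no-consecutive-maximal rule is precisely what is needed to trim the naive range $|D_{N-1}|+|D_N|$ down to $|D_{N-1}|$ and to make the correspondence bijective.

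Given the lemma, existence of the expansion for $\gamma\in[-\alpha,1-\alpha)$ is immediate from the case $N=0$, provided $\gamma$ avoids the countable set of boundary points arising from the nested subintervals one obtains by fixing prefixes $b_1,\dots,b_n$ and letting the remaining tail range over $I_{n+1}$. A direct check identifies these excluded points as the elements of $(\Z+\alpha\Z)\cap I_0$, so the hypothesis \eqref{gam rest} rules them out. Uniqueness then follows from the bijectivity clause of the lemma applied at $N=0$, or equivalently by comparing two putative expansions coefficient by coefficient using the bound $|T_{N+1}|<|D_N|$ to force $b_{N+1}=b'_{N+1}$ at each $N$.

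The main obstacle is the range lemma itself. The combinatorial difficulty is to verify inductively that the no-consecutive-maximal constraint, combined with the alternating signs $\sgn D_n=(-1)^n$, shrinks the apparent tail range from $|D_{N-1}|+|D_N|$ to exactly $|D_{N-1}|$, and to determine which endpoint of $I_N$ is included so as to obtain the half-open interval $[-\alpha,1-\alpha)$ at $N=0$. This endpoint tracking is also what forces the asymmetric boundary condition $b_1<a_1$ as opposed to the non-strict inequality $b_{n+1}\le a_{n+1}$ for $n\ge 1$. Once this bookkeeping is in place, both existence and uniqueness are essentially corollaries.
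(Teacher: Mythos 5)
The paper does not prove this proposition; it cites Rockett--Sz\"usz \cite[Chapter 3]{RockettSzusz1992}, so there is no in-paper argument to compare against. Your proposal is a reasonable nested-interval/greedy plan, and you correctly identify the excluded set as $(\alpha\Z+\Z)\cap[-\alpha,1-\alpha)$. However, the central ``range lemma,'' on which everything else rests, is incorrectly stated, and you explicitly flag that you have not verified it --- so this is a genuine gap rather than a complete proof.

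Concretely, with only the constraints $0\le b_{n+1}\le a_{n+1}$ and the internal no-consecutive-maximal rule, the range of $T_N=\sum_{n\ge N}b_{n+1}D_n$ is the full interval between $-\sum_{k\ge N,\ k\not\equiv N}a_{k+1}|D_k|=-|D_N|$ and $\sum_{k\ge N,\ k\equiv N}a_{k+1}|D_k|=|D_{N-1}|$ (parities modulo $2$), i.e.\ it has length $|D_{N-1}|+|D_N|$, not $|D_{N-1}|$. Both extreme sequences (e.g.\ $b_{N+1}=a_{N+1},\,b_{N+2}=0,\,b_{N+3}=a_{N+3},\dots$) satisfy the no-consecutive-maximal rule, so that rule does \emph{not} trim the naive range; its role is to remove redundant representations (uniqueness), exactly as trailing $9$'s are forbidden in decimal expansions. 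One does get length $|D_{N-1}|$ for the tail, but only under the additional boundary restriction $b_{N+1}\le a_{N+1}-1$ (for instance, taking $\alpha=[0;2,2,2,\dots]$ and $N=1$ one finds the restricted range $[-(|D_1|+|D_2|),\,|D_1|]$ of length $2|D_1|+|D_2|=|D_0|$). You impose that sharper bound only at $N=0$, so the statement of your lemma is wrong for $N\ge1$, and the dichotomy ``$b_N=0$ versus $b_N\ne0$,'' which governs whether $b_{N+1}$ is restricted, must be carried through the induction. The same confusion infects your uniqueness argument: the asserted strict bound $|T_{N+1}|<|D_N|$ fails for the unrestricted tail (its supremum equals $|D_N|$), so forcing $b_{N+1}=b'_{N+1}$ by a naive size comparison does not work without again invoking the admissibility constraints and the genericity hypothesis \eqref{gam rest}. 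To repair the proposal you would need to state and prove the two-case range lemma and do the endpoint bookkeeping you describe as ``the main obstacle''; as it stands, that obstacle is not cleared.
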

We point out that (\ref{cffact1}), (\ref{cffact2}), and (\ref{D_nfact2}) together imply that the series (\ref{Ostexp2}) is absolutely convergent. Proofs of Propositions \ref{Ostexp1} and \ref{Ostexp2} can be found in \cite[Chapter 3]{RockettSzusz1992}. The reason for our interest in Ostrowski expansions is that they give us a precise and convenient way of working with the quantities $\|s\alpha-\gamma\|,~s\ge 1$, as illustrated by the following proposition.
\begin{prop}\cite[Lemma 5]{berhayvel1}\label{nalph-gam prop}
Let $\alpha\in [0,1)$ be irrational and suppose that $\gamma\in [-\alpha,1-\alpha)$ satisfies (\ref{gam rest}). Choose an integer $s\in\N$ and, referring to the Ostrowski expansions (\ref{Ostexp1}) and (\ref{Ostexp2}), write $\delta_{n+1}:=c_{n+1}-b_{n+1}$ for $n\ge 0$. Let $m$ be the smallest integer for which $\delta_{m+1}\not= 0$. If $m\ge 4$ then
\beq\label{nalph-gam formula1}
\|s\alpha-\gamma\|=\left|\sum_{n=m}^\infty \delta_{n+1}D_n\right|=\sgn(\delta_{m+1}D_m)\cdot\sum_{n=m}^\infty \delta_{n+1}D_n.
\eeq
\end{prop}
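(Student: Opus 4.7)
The plan is first to reduce the claim to an identity modulo $\Z$, and then to verify that the resulting series has absolute value less than $1/2$ and the sign of its leading term.

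For the first step, I would substitute the Ostrowski expansion \eqref{Ostexp1} into $s\alpha$ and use the definition \eqref{D_ndef} of $D_n$:
\[
s\alpha = \sum_{n=0}^\infty c_{n+1}q_n\alpha = \sum_{n=0}^\infty c_{n+1}D_n + \sum_{n=0}^\infty c_{n+1}p_n.
\]
The second sum on the right is an integer (only finitely many $c_{n+1}$ are nonzero), so subtracting the expansion \eqref{Ostexp2} of $\gamma$ yields
\[
s\alpha - \gamma \equiv \sum_{n=0}^\infty (c_{n+1}-b_{n+1})D_n = \sum_{n=m}^\infty \delta_{n+1}D_n \pmod{\Z},
\]
using $\delta_{n+1}=0$ for $n<m$. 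Call the right-hand sum $S$.

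For the second step, the key observation is that by \eqref{D_nfact2} the $D_n$ alternate in sign, and by \eqref{D_nfact1} we have the telescoping identity $a_{n+1}\abs{D_n} = \abs{D_{n-1}} - \abs{D_{n+1}}$. Combined with $\abs{\delta_{n+1}}\le a_{n+1}$, a crude telescoping bound gives $\sum_{n>m}\abs{\delta_{n+1}D_n}\le \abs{D_m}+\abs{D_{m+1}}$, which is not by itself enough to control the sign of $S$. The way to tighten this is to exploit the Ostrowski constraints $c_{n+1}=a_{n+1}\Rightarrow c_n=0$ (and likewise for $b$), which prevent $\abs{\delta_{n+1}}$ from saturating $a_{n+1}$ in two consecutive indices. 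I would therefore define partial sums $T_N:=\sum_{n=m}^N \delta_{n+1}D_n$ and show inductively that $\sgn(T_N)=\sgn(\delta_{m+1}D_m)$ and $\abs{T_N}<\abs{D_{m-1}}$ for all $N\ge m$; passing to the limit then gives both $\sgn(S)=\sgn(\delta_{m+1}D_m)$ and $\abs{S}\le\abs{D_{m-1}}$.

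The main obstacle is the tail estimate: a naive bound permits the tail to reverse the sign of the leading term. The careful case analysis using the Ostrowski constraints, together with the rapid decay of $\abs{D_n}$, is what I expect to require the most effort, and is precisely where the hypothesis $m\ge 4$ enters --- it guarantees enough room in the continued fraction expansion to force $\abs{D_{m-1}}<1/2$ via \eqref{cffact2}, so that the congruence from the first step identifies $\norm{s\alpha-\gamma}$ with $\abs{S}$. Once both inequalities are in hand, the sign expression in \eqref{nalph-gam formula1} follows immediately from the sign claim.
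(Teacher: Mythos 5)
The paper does not prove this proposition; it is imported verbatim (as Lemma~5) from the cited preprint \cite{berhayvel1}, so there is no in-paper proof to compare against. Judged on its own, your reduction is on the right track but has a genuine gap in the crucial step.

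The first step is correct: substituting \eqref{Ostexp1} and \eqref{D_ndef} gives $s\alpha-\gamma\equiv S\pmod\Z$ with $S=\sum_{n\ge m}\delta_{n+1}D_n$, and the telescoping bound $\sum_{n\ge m}a_{n+1}\abs{D_n}=\abs{D_{m-1}}+\abs{D_m}$ together with $\abs{D_{n-1}}\le 1/q_n$ and $q_m\ge q_4\ge5$, $q_{m+1}\ge q_5\ge8$ indeed yields $\abs S<1/2$, so $\norm{s\alpha-\gamma}=\abs S$. (A small misattribution: $m\ge4$ is not there merely to force $\abs{D_{m-1}}<1/2$, which already holds for $m\ge3$; it is needed to make the \emph{combined} crude bound $\abs{D_{m-1}}+\abs{D_m}$ drop below $1/2$.)

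The gap is in the sign claim. You correctly observe that the crude bound allows the tail $\sum_{n>m}\delta_{n+1}D_n$ to reach $\abs{D_m}+\abs{D_{m+1}}$, which exceeds $\abs{\delta_{m+1}D_m}$ when $\abs{\delta_{m+1}}=1$, so a finer argument using the admissibility constraints is required. But the induction you sketch --- maintaining only $\sgn(T_N)=\sgn(\delta_{m+1}D_m)$ and $\abs{T_N}<\abs{D_{m-1}}$ --- does not close. In the step $T_{N+1}=T_N+\delta_{N+2}D_{N+1}$, if the increment opposes $T_N$ then preserving the sign requires a \emph{lower} bound $\abs{T_N}>\abs{\delta_{N+2}D_{N+1}}$, and if it reinforces $T_N$ then preserving the upper bound requires knowing $\abs{T_N}$ is not already close to $\abs{D_{m-1}}$; neither piece of information is carried by the stated hypothesis. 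One needs a two-sided inductive invariant that records how much ``room'' is left on each side (essentially sandwiching $T_N$ in an interval shrinking with $N$), and it is exactly there that the constraints $c_{n+1}=a_{n+1}\Rightarrow c_n=0$ and $b_{n+1}=a_{n+1}\Rightarrow b_n=0$ must be invoked --- for instance, $\delta_{m+1}>0$ forces $c_{m+1}\ge1$, which rules out $c_{m+2}=a_{m+2}$ and hence bounds $\delta_{m+2}\le a_{m+2}-1$. You name this case analysis as the hard part and explicitly defer it, so as written this is a plan rather than a proof of \eqref{nalph-gam formula1}.
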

Combining this proposition with (\ref{cffact3}) and (\ref{D_nfact2}) gives us the following corollary.
\begin{cor}\label{nalph-gam formula2}
With the same notation as in Proposition \ref{nalph-gam prop}, if $m\ge 4$ then
\[
\|s\alpha-\gamma\|=(-1)^m\sgn(\delta_{m+1})\cdot\sum_{n=m}^\infty \frac{(-1)^n\delta_{n+1}}{q_n(\zeta_{n+1}+\xi_n)}.
\]
\end{cor}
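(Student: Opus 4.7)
The plan is to simply substitute the closed-form expressions for $D_n$ (coming from \eqref{cffact3} and \eqref{D_nfact2}) into the formula provided by Proposition~\ref{nalph-gam prop}, so there is essentially no novel content and the only task is bookkeeping of signs.

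First I would rewrite $D_n$ itself in the desired form. Starting from the definition \eqref{D_ndef}, $D_n = q_n\alpha-p_n = q_n\bigl(\alpha - \frac{p_n}{q_n}\bigr)$, and then \eqref{cffact3} gives
\[
D_n = q_n\cdot\frac{(-1)^n}{q_n^2(\zeta_{n+1}+\xi_n)} = \frac{(-1)^n}{q_n(\zeta_{n+1}+\xi_n)}.
\]
This immediately handles the summand, since Proposition~\ref{nalph-gam prop} gives $\|s\alpha-\gamma\| = \sgn(\delta_{m+1}D_m)\sum_{n=m}^\infty \delta_{n+1}D_n$, and substituting the expression for $D_n$ produces exactly the series $\sum_{n=m}^\infty \frac{(-1)^n\delta_{n+1}}{q_n(\zeta_{n+1}+\xi_n)}$ that appears in the corollary.

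The remaining point is the overall sign. Here I would use \eqref{D_nfact2}: since $\|q_n\alpha\|\ge 0$, the identity $D_n = (-1)^n\|q_n\alpha\|$ shows that $\sgn(D_m) = (-1)^m$ for $m\ge 1$ (the positivity being strict because $\alpha$ is irrational). Therefore $\sgn(\delta_{m+1}D_m) = (-1)^m\sgn(\delta_{m+1})$, and combining this with the substitution above yields the claimed identity. The hypothesis $m\ge 4$ is simply inherited from Proposition~\ref{nalph-gam prop} and is not used again in this step.

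There is no real obstacle: once the two facts \eqref{cffact3} and \eqref{D_nfact2} are in hand, the corollary is a one-line substitution. The only thing to be careful about is keeping the factor $(-1)^m$ from $\sgn(D_m)$ separate from the $(-1)^n$ that appears inside the sum, so as not to double-count signs.
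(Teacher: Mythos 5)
Your proof is correct and follows exactly the route the paper indicates: the paper states the corollary with only the remark that it comes from ``combining [Proposition~\ref{nalph-gam prop}] with (\ref{cffact3}) and (\ref{D_nfact2}),'' and your derivation fills in precisely those substitutions and the resulting sign bookkeeping. Nothing further is needed.
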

The essence of Proposition \ref{nalph-gam prop} is that when $m\ge 4$ the term $\delta_{m+1}D_m$ dominates the rest of the series in (\ref{nalph-gam formula1}). This can be exploited to give good estimates for the quantities $\|s\alpha-\gamma\|$. For our purposes we only need upper bounds, and the following corollary of Proposition \ref{nalph-gam prop} (proved in \cite{berhayvel1}) will suffice.
\begin{cor}\label{nalph-gam estimate1}
With the same notation as in Proposition \ref{nalph-gam prop}, if $m\ge 4$ then
\[
\|s\alpha-\gamma\|\le (|\delta_{m+1}|+2)\|q_m\alpha\|.
\]
\end{cor}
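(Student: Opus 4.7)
The plan is to start from the exact series representation given in Proposition~\ref{nalph-gam prop},
\[
\|s\alpha-\gamma\|=\left|\sum_{n=m}^\infty \delta_{n+1}D_n\right|,
\]
peel off the leading term by the triangle inequality, and then use the Ostrowski constraints together with a telescoping identity for $\abs{D_n}$ to bound the tail by $2\abs{D_m}$.

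First I would apply the triangle inequality to write
\[
\|s\alpha-\gamma\|\le \abs{\delta_{m+1}}\abs{D_m}+\sum_{n=m+1}^\infty \abs{\delta_{n+1}}\abs{D_n}.
\]
Since $c_{n+1},b_{n+1}\in\{0,1,\dots,a_{n+1}\}$ by Propositions~\ref{Ostrowski1} and \ref{Ostrowski2}, we have $\abs{\delta_{n+1}}\le a_{n+1}$, so it suffices to bound $\sum_{n=m+1}^\infty a_{n+1}\abs{D_n}$.

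Next I would observe that, by (\ref{D_nfact2}), the signs of $D_{n+1}$ and $D_{n-1}$ agree and are opposite to the sign of $D_n$. Taking absolute values in the recursion (\ref{D_nfact1}) then gives the telescoping identity
\[
a_{n+1}\abs{D_n}=\abs{D_{n-1}}-\abs{D_{n+1}}.
\]
Summing from $n=m+1$ to infinity yields
\[
\sum_{n=m+1}^\infty a_{n+1}\abs{D_n}=\abs{D_m}+\abs{D_{m+1}}\le 2\abs{D_m},
\]
where we used $\abs{D_{m+1}}\le\abs{D_m}$ (from the same recursion, since $a_{m+2}\ge 1$). Combining with the leading term and invoking (\ref{D_nfact2}) once more to identify $\abs{D_m}=\|q_m\alpha\|$ gives the claimed bound.

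The main obstacle is essentially bookkeeping: one must verify the sign pattern of the $D_n$ carefully enough to conclude that the upper bound $\sum_{n=m+1}^\infty a_{n+1}\abs{D_n}$ collapses telescopically, rather than, say, merely giving $\abs{D_m}+\abs{D_{m+1}}$ as a one-sided estimate; but once the alternation in (\ref{D_nfact2}) is noted this is immediate. The hypothesis $m\ge 4$ plays no role in this step — it is only needed so that the underlying equality in Proposition~\ref{nalph-gam prop} holds.
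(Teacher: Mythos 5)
Your proof is correct. The paper itself does not prove this corollary but cites \cite{berhayvel1} for it, so your argument fills in a self-contained derivation. The key steps all check out: the triangle inequality applied to the series in Proposition~\ref{nalph-gam prop} isolates $|\delta_{m+1}||D_m|$; the Ostrowski digit constraints give $|\delta_{n+1}|\le a_{n+1}$ for $n\ge 1$ (and all relevant $n$ satisfy $n\ge m+1\ge 5$); the recursion \eqref{D_nfact1} together with the sign alternation in \eqref{D_nfact2} and the monotone decay of $|D_n|$ yields the identity $a_{n+1}|D_n|=|D_{n-1}|-|D_{n+1}|$; and the telescoping sum collapses to $|D_m|+|D_{m+1}|\le 2|D_m|$ since $|D_n|\to 0$. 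Combining gives exactly $(|\delta_{m+1}|+2)\|q_m\alpha\|$. One tiny point worth making explicit: the identity $a_{n+1}|D_n|=|D_{n-1}|-|D_{n+1}|$ requires knowing $|D_{n+1}|<|D_{n-1}|$; you note this follows from the recursion with $a_{n+2}\ge 1$, which is fine, but it is worth stating as a prerequisite for writing the absolute-value form of the recursion in the first place rather than only as an afterthought when bounding $|D_{m+1}|$ by $|D_m|$.
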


\subsection{Existence}
Now we return to our problems about finite and infinite order rational approximations. First we show that every irrational number has an infinite order approximation.
\begin{thm}\label{arbitrary approx thm}
Let $\Psi:\N\rar\R^+$ be a decreasing function, and suppose that $\alpha\in [0,1)$ is irrational.
There exists a real number $\gamma$ and a strictly increasing sequence $\{s_k\}_{k=1}^\infty$ such that
\[\|s_k\alpha-\gamma\|\le \Psi(s_k)~\text{ for }~k\ge 1.\]
\end{thm}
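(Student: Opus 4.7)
I will construct $\gamma$ and the sequence $\{s_k\}$ in tandem via Ostrowski expansions with respect to $\alpha$. Choose a strictly increasing sequence $4 \le m_1 < m_2 < \cdots$ of integers with $m_{j+1} \ge m_j + 2$; the precise values are selected inductively below. Then set
\[
s_k := \sum_{j=1}^k q_{m_j}, \qquad \gamma := \sum_{j=1}^\infty D_{m_j}.
\]
The series defining $\gamma$ converges absolutely in view of \eqref{cffact2} and \eqref{D_nfact2}.

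The Ostrowski-type conditions of Propositions~\ref{Ostrowski1} and \ref{Ostrowski2} hold automatically: all nonzero coefficients equal $1\le a_{m_j+1}$, and the ``if $b_{n+1}=a_{n+1}$ then $b_n=0$'' constraint is vacuous since we insist that $m_{j+1}\ne m_j+1$. By choosing $m_1$ sufficiently large we may guarantee that $\sum_j|D_{m_j}|<\alpha$, so $\gamma\in[-\alpha,1-\alpha)$ (after possibly subtracting an integer, which affects neither $\gamma$'s Ostrowski expansion mod $\mathbb{Z}$ nor the quantities $\|s_k\alpha-\gamma\|$). The non-degeneracy condition \eqref{gam rest} follows from the fact that $\gamma$ has infinitely many nonzero Ostrowski digits while every element of $(\alpha\mathbb{Z}+\mathbb{Z})\cap[-\alpha,1-\alpha)$ has a terminating Ostrowski expansion: indeed, if $t\alpha-\ell$ lies in this interval and $t=\sum c_{n+1}q_n$ is the Ostrowski expansion of $|t|$, then $t\alpha-\sum c_{n+1}p_n=\pm\sum c_{n+1}D_n$ is finite, so by uniqueness in Proposition~\ref{Ostrowski2} our infinite-support $\gamma$ cannot be of this form.

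With $\gamma$ in place, I apply Corollary~\ref{nalph-gam estimate1} with $s=s_k$. The Ostrowski expansion \eqref{Ostexp1} of $s_k$ has $c_{m_j+1}=1$ for $1\le j\le k$ and vanishes otherwise, while the expansion \eqref{Ostexp2} of $\gamma$ has $b_{m_j+1}=1$ for every $j\ge1$. Hence $\delta_{n+1}:=c_{n+1}-b_{n+1}$ vanishes for $n\le m_k$, and the smallest index $m$ with $\delta_{m+1}\ne0$ equals $m_{k+1}$, with $\delta_{m_{k+1}+1}=-1$. Since $m_{k+1}\ge m_1\ge4$, Corollary~\ref{nalph-gam estimate1} yields
\[
\|s_k\alpha-\gamma\|\le 3\,\|q_{m_{k+1}}\alpha\|.
\]

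Since $\|q_n\alpha\|\to0$ by \eqref{cffact2}, the inductive construction is now trivial: having fixed $m_1,\dots,m_k$ (hence $s_k$), choose $m_{k+1}>m_k+1$ large enough that $3\|q_{m_{k+1}}\alpha\|\le\Psi(s_k)$. The resulting $\{s_k\}$ is strictly increasing since $s_{k+1}-s_k=q_{m_{k+1}}\ge1$, and the required bound holds for all $k\ge1$. The only real obstacle is the non-degeneracy verification for $\gamma$, handled above by comparing finite and infinite Ostrowski expansions; everything else is a direct application of the machinery already developed in the preceding subsection.
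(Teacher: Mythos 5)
Your proposal is correct and follows essentially the same approach as the paper's proof: build $\gamma$ from an Ostrowski expansion supported on a sparse, inductively chosen sequence of indices, take the $s_k$ to be the corresponding partial sums of convergent denominators, and invoke Corollary~\ref{nalph-gam estimate1} to obtain $\|s_k\alpha-\gamma\|\le 3\|q_{m_{k+1}}\alpha\|$, which is then made $\le\Psi(s_k)$ by the inductive choice of $m_{k+1}$. The only difference is that you are somewhat more explicit than the paper about verifying the preconditions on $\gamma$ (that it lies in $[-\alpha,1-\alpha)$ and satisfies~\eqref{gam rest}), a point the paper leaves implicit.
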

\begin{proof}
First we construct a sequence $\{n_k\}$ of positive integers by setting $n_1=4$ and then, for $k\ge 1$, choosing $n_{k+1}$ to be the smallest integer greater than $n_k+1$ for which
\[
\frac{3}{q_{n_{k+1}}}\le \Psi(q_{n_k+1}).
\]
Let $\gamma\in\R$ be the real number with Ostrowski expansion, in terms of $\alpha$, given by
\begin{align*}
&b_{n_k+1}=1\text{ for all }~k\in\N,~\text{ and}\\
&b_{n+1}=0~\text{for all}~n\in\N\smallsetminus\{n_k\},
\end{align*}
and for each $k$ let $s_k$ be defined by
\begin{align*}
s_k&=\sum_{n=0}^{n_k}b_{n+1}q_n=\sum_{m=1}^k q_{n_m}.
\end{align*}
Then by Corollary \ref{nalph-gam estimate1} and inequality \eqref{cffact2} we have that
\begin{align*}
\|s_k\alpha-\gamma\|&\le 3\|q_{n_{k+1}}\alpha\|\le\frac{3}{q_{n_{k+1}+1}}.
\end{align*}
Since $s_k\le q_{n_k+1}$ and $\Psi$ is decreasing, the right hand side here is less than $\Psi(s_k)$.
\end{proof}
For example, by choosing $\Psi(s)=e^{-s}$ we obtain the following corollary.
\begin{cor}
\label{Existence}
If $\alpha\in\R$ is irrational, then there exists an infinite order rational approximation $\RR$ to $\alpha$ with $\gamma_j=0$ for all $j\geq2$. That is, there exists a real number $\gamma_1$ such that, for all $N\in\N$,
\[
\frac{r}{s} = \alpha + \gamma_1 s^{-1} + o\left([\abs r + \abs s]^{-N}\right) ,\quad\text{as}\quad|s|\rar\infty,~(r,s)\in\RR.
\]
\end{cor}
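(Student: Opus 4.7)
The plan is to apply Theorem~\ref{arbitrary approx thm} directly with the super-exponentially decreasing function $\Psi(s) = e^{-s}$, and to verify that the resulting approximation satisfies the stronger asymptotic condition of Definition~\ref{Rational definition}.

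First, I would reduce to the case $\alpha \in [0,1)$: if $\alpha$ is irrational and we write $\alpha = m + \alpha'$ with $m\in\Z$ and $\alpha'\in[0,1)$, then any rational approximation $(r',s)$ to $\alpha'$ gives a rational approximation $(r'+ms, s)$ to $\alpha$ with the same error behavior and the same coefficient $\gamma_1$. So assume $\alpha\in[0,1)$.

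Now apply Theorem~\ref{arbitrary approx thm} with $\Psi(s) = e^{-s}$. This produces a real number $\gamma$ and a strictly increasing sequence $\{s_k\}_{k\ge 1}$ of positive integers with
\[
\|s_k\alpha-\gamma\|\le e^{-s_k}.
\]
For each $k$ let $r_k\in\Z$ be the nearest integer to $s_k\alpha-\gamma$, so that $r_k = s_k\alpha - \gamma + \epsilon_k$ with $|\epsilon_k|\le e^{-s_k}$. Set $\RR := \{(r_k,s_k) : k\ge 1\}\subset\Z^2$ and define $\gamma_1 := -\gamma$ and $\gamma_j := 0$ for $j\ge 2$. Dividing by $s_k$ gives
\[
\frac{r_k}{s_k} = \alpha + \frac{\gamma_1}{s_k} + \frac{\epsilon_k}{s_k}.
\]

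It then remains to check that the error $\epsilon_k/s_k$ is $o\bigl([\abs{r_k}+\abs{s_k}]^{-N}\bigr)$ for every $N\in\N$. Since $|r_k|\le |\alpha|\,s_k + |\gamma|+1$, we have $|r_k|+|s_k| = O(s_k)$, so $[\abs{r_k}+\abs{s_k}]^{-N}\ge C_N s_k^{-N}$ for some $C_N>0$. The error satisfies $|\epsilon_k|/s_k \le e^{-s_k}/s_k$, which decays faster than any power of $s_k$ as $k\to\infty$, hence faster than any power of $|r_k|+|s_k|$. This gives the required asymptotic expansion for every $N$, so $\RR$ is an infinite order rational approximation to $\alpha$ with $\gamma_j=0$ for $j\ge 2$.

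There is no serious obstacle: the content is entirely in Theorem~\ref{arbitrary approx thm}, whose super-polynomial error control (available because $\Psi$ can be chosen arbitrarily) is exactly what is needed to kill all higher coefficients $\gamma_j$ simultaneously. The only minor bookkeeping points are the reduction to $\alpha\in[0,1)$ and the translation from the ``distance to the nearest integer'' formulation of Theorem~\ref{arbitrary approx thm} to the ``asymptotic of $r/s$'' formulation of Definition~\ref{Rational definition}, both of which are routine.
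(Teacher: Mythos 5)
Your proof is correct and follows essentially the same approach as the paper's: apply Theorem~\ref{arbitrary approx thm} with $\Psi(s)=e^{-s}$ and form $\RR$ from pairs $(r_k,s_k)$ with $r_k$ the appropriate nearest integer. Your version is marginally more careful in spelling out the reduction to $\alpha\in[0,1)$ and in taking $r_k$ to be the nearest integer to $s_k\alpha-\gamma$ (so that $\gamma_1=-\gamma$ and the rounding is unambiguous) rather than to $\alpha s_k$ as the paper does, but the underlying argument is identical.
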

\begin{proof}
With $s_k$ defined by Theorem~\ref{arbitrary approx thm}, let $r_k$  be the nearest integer to $\alpha s_k$. The rational approximation is then 
\[
\RR = \{(r_k,s_k)\mid k\in\N\} .
\]
\end{proof}
The proof of Theorem \ref{arbitrary approx thm} tells us how to construct infinite order approximations to any irrational number. A more subtle problem is to try to construct infinite order approximations where the denominators do not grow too quickly. In Section 6 we will demonstrate a construction for quadratic irrationals which produces infinite order approximations with denominators that grow at most exponentially. By contrast, for the integers $s_k$ constructed in the proof of Theorem \ref{arbitrary approx thm} with $\Psi(s)=e^{-s}$, we have that $s_k\ge e^{e^{\cdots (k\text{-times})}}$.

\section{Quantization}
\label{Quantization}
\subsection{Definition}
The idea of strict deformation quantization was conceived by Rieffel \cite{rie11}. There are several variations on his definition and several ways of describing the structure. We will use a continuous field of \cs-algebras and a quantization map.

This is the definition of quantization given in \cite{haw11}:
\begin{definition}
Let $\A_0$ be a Poisson ${}^*$-subalgebra of functions on a Poisson manifold $\M$, large enough to separate points.
An \emph{order $N$ strict deformation quantization} $(I,A,Q)$ of $\A_0$ consists of:
a locally compact subset $I \subseteq \R$ with $0\in I$ an accumulation point,
a continuous field of \cs-algebras $A$ over $I$,
and a ${}^*$-linear map $Q:\C^\infty_0(\M)\to \Gamma(I,A)$ such that:
\begin{enumerate}
\item At $0\in I$, this map is an inclusion $Q_0:\A_0\into A_0\subset\C_b(\M)$ of $\A_0$ as a dense ${}^*$-subalgebra;
\item for $f,g\in\A_0$, there exist functions $C_1(f,g),\dots,C_N(f,g)\in\A_0$ such that
\[
Q_\hbar(f)Q_\hbar(g) = Q_\hbar(fg+ \hbar C_1(f,g)+\dots + \hbar^N C_N(f,g)) + o(\hbar^N) \; ;
\]
\item for $f,g\in\A_0$,
\[
C_1(f,g) - C_1(g,f) = i\{f,g\} .
\]
\end{enumerate}
\end{definition}
It is easy to check that $C_j(f,g)$ is uniquely determined by $f$ and $g$, which justifies this cumbersome notation.

In practice, the given structure is usually the collection of algebras and maps $Q_\hbar : \A_0 \to A_\hbar$ for $\hbar\neq0\in I$. The continuous field structure is then constructed from this.

\subsection{A Construction}
Let $\M$ be a compact, \Kahler\ manifold with symplectic form $\omega \in \Omega^2(\M)$. Suppose that $L\to\M$ is a Hermitian, holomorphic line bundle with curvature, $\curv L = \omega$. The space $\Lh(\M,L)$ of holomorphic sections of $L$ is finite-dimensional, so it is automatically a closed subspace of the Hilbert space $L^2(\M,L)$ of square-integrable sections of $L$ (defined using the Hermitian inner product and the \Kahler\ volume form). Let
\[
\Pi_L : L^2(\M,L) \to \Lh(\M,L)
\]
be the orthogonal projection onto this subspace. There is an obvious representation of the algebra of continuous functions $\C(\M)$ on $L^2(\M,L)$, defined by pointwise multiplication. In particular, if $f\in\C(\M)$ and $\psi\in\Lh(\M,L)$, then the product $f\psi\in L^2(\M,L)$ is square-integrable, so we can construct a vector $\Pi_L(f\psi)\in \Lh(\M,L)$. This construction defines a map,
\begin{gather*}
T_L : \C(\M) \to \Li[\Lh(\M,L)] \\
T_L(f)\psi := \Pi_L(f\psi) .
\end{gather*}

The tensor powers $L^{\otimes k}$ are also positive, Hermitian, holomorphic line bundles, but with the curvature rescaled:
\[
\curv L^{\otimes k} = k \omega .
\]
For any smooth functions, $f$ and $g$, the product $T_{L^{\otimes k}}(f)T_{L^{\otimes k}}(g)$ can be asymptotically expanded in $k$. To be precise:
\begin{thm}
\label{Toeplitz1}
\cite{sch} For any $f,g\in\C^\infty(\M)$, there exist unique functions $C^\omega_j(f,g)\in\C^\infty(\M)$ for $j=0,1,2,\dots$ starting with $C^\omega_0(f,g)=fg$ such that for any $N\in\N$,
\beq
\label{Toeplitz_bound}
T_{L^{\otimes k}}(f)T_{L^{\otimes k}}(g) = \sum_{j=0}^N k^{-j}\, T_{L^{\otimes k}}[C^\omega_j(f,g)] + \Or(k^{-N-1})
\eeq
for all $k\in\N$. Antisymmetrizing $C^\omega_1$ gives,
\[
C^\omega_1(f,g)-C^\omega_1(g,f) = i \{f,g\}_\omega,
\]
the Poisson bracket determined by $\omega$ as a symplectic form. The norm of $T_{L^{\otimes k}}(f)$ converges to,
\beq
\label{norm_limit}
\lim_{k\to\infty}\Norm{T_{L^{\otimes k}}(f)} = \norm{f} .
\eeq
\end{thm}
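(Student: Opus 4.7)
The plan is to follow the strategy of Bordemann–Meinrenken–Schlichenmaier, which realizes all the Toeplitz operators $T_{L^{\otimes k}}(f)$ simultaneously on a single Hardy space and extracts the expansion from a microlocal symbol calculus. First I would introduce the principal $U(1)$-bundle $\pi : X \to \M$ obtained as the unit circle bundle inside the dual bundle $L^*$; since $\curv L = \omega$ is positive, the associated disk bundle $Y \subset L^*$ is strictly pseudoconvex with boundary $X$. Sections of $L^{\otimes k}$ pull back to functions on $X$ that transform by the character $e^{ik\theta}$ under the $U(1)$-action, and the full Hardy space $H^2(X) \subset L^2(X)$ of boundary values of holomorphic functions on $Y$ decomposes as $\bigoplus_{k\geq 0} \Lh(\M, L^{\otimes k})$. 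Under this isomorphism the orthogonal projection $\Pi : L^2(X) \to H^2(X)$ (the Szeg\H{o} projector) is the sum of the individual Bergman projections $\Pi_{L^{\otimes k}}$, and pointwise multiplication by $\pi^* f$ commutes with the $U(1)$-action so that the global Toeplitz operator $T_f := \Pi\, \pi^* f\, \Pi$ on $H^2(X)$ restricts on the weight-$k$ subspace to $T_{L^{\otimes k}}(f)$.

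Next I would invoke the Boutet de Monvel–Sjöstrand theorem that describes the Szeg\H{o} kernel as a Fourier integral operator of Hermite type with a complex-valued phase whose imaginary part is nonnegative and vanishes exactly on the diagonal of $X\times X$. This puts $T_f$ in the class of \emph{generalized Toeplitz operators} in the sense of Boutet de Monvel–Guillemin, which is closed under composition and carries a symbol calculus on the symplectic cone $\Sigma = \{(x, t\alpha_x) : t > 0\} \subset T^*X$ (with $\alpha$ the contact form on $X$). The principal symbol of $T_f$ is $f$ pulled back to $\Sigma$, and for two such operators one has $\sigma(T_f T_g) = \sigma(T_f)\sigma(T_g)$ together with a complete expansion $T_f T_g \sim \sum_{j\geq 0} t^{-j} T_{C_j^\omega(f,g)}$ in the cone variable $t$, where each $C_j^\omega(f,g)$ is a bidifferential operator determined by the Kähler data.

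To convert the expansion in $t$ on $\Sigma$ into an expansion in $k^{-1}$, I would use the $U(1)$-equivariance: restricting to the $k$-th isotypic subspace replaces $t$ by $k$, and the cone asymptotics become uniform asymptotics in $k$ of the form \eqref{Toeplitz_bound}. The remainder is controlled by the operator-norm bound $\|T_{L^{\otimes k}}(h)\| \leq \|h\|_\infty + O(k^{-1})$, itself a consequence of the symbol calculus. The identification $C^\omega_0(f,g) = fg$ is the product of principal symbols, while computing the subprincipal symbol of the composition (using the Poisson structure on $\Sigma$ induced by the contact form, or equivalently a local Bergman-kernel expansion in Bochner coordinates) gives $C^\omega_1(f,g) - C^\omega_1(g,f) = i\{f,g\}_\omega$. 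Finally, \eqref{norm_limit} follows from the general principle that the operator norm of a Toeplitz operator with continuous symbol converges to the sup norm of the symbol: the upper bound is immediate from $\|\Pi\|=1$ and $\|\pi^* f\|=\|f\|$, and the lower bound follows by constructing approximate eigenfunctions concentrated near a maximum of $|f|$, e.g. coherent states $e_x^{(k)} \in \Lh(\M, L^{\otimes k})$ whose Berezin symbols $\langle e_x^{(k)}, T_{L^{\otimes k}}(f) e_x^{(k)}\rangle$ converge to $f(x)$ as $k \to \infty$.

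The main obstacle is the microlocal input: establishing the Boutet de Monvel–Sjöstrand parametrix for $\Pi$ with its complex phase function and deriving the composition formula with its full symbol expansion. Once that machinery is in place, the Kähler-geometric identification of the leading symbols and the norm convergence are comparatively routine local computations near the diagonal.
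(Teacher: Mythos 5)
The paper simply cites this theorem from Schlichenmaier \cite{sch} and gives no proof of its own, so there is no internal argument to compare against. Your sketch is a faithful outline of the Bordemann--Meinrenken--Schlichenmaier strategy underlying the cited reference: the circle-bundle/Hardy-space reduction, the Boutet de Monvel--Sj\"ostrand parametrix for the Szeg\H{o} projector, the generalized Toeplitz symbol calculus for the composition expansion, and coherent states for the norm limit.
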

Equation \eqref{Toeplitz_bound} for $N=0$ and eq.~\eqref{norm_limit} imply that there exists a unique continuous field $A$ of \cs-algebras over
$\{0,\dots,\tfrac13,\tfrac12,1\}\subset \R$ with $A_0:= \C(\M)$ and $A_{1/k} := \Li[\Lh(\M,L^{\otimes k})]$ such that for any $f\in\C^\infty(\M)$,
\[
\begin{split}
Q_0(f)&=f \\
Q_{1/k}(f) &= T_{L^{\otimes k}}(f)
\end{split}
\]
defines a continuous section $\hbar\mapsto Q_\hbar(f)$. The rest of the theorem shows that this is an infinite order quantization of $\C^\infty(\M)$.

\begin{thm}
\label{Continuity}
For each $j$, $C_j^\omega(f,g)$ is a bidifferential operator on $f$ and $g$, determined by $\omega$.
The value of $C_j^\omega(f,g)$ at each point of $\M$ depends continuously upon $\omega$ in the Fr\'echet topology (or in the $\C^m$ topology for some $m$). As a function of $\omega$, it is homogeneous of degree $-j$.
\end{thm}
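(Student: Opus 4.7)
The plan is to split the theorem into its three assertions---bidifferentiality, continuous dependence on $\omega$, and homogeneity---treating the first two together via the near-diagonal Bergman kernel expansion and reducing homogeneity to a uniqueness argument for the asymptotic series.

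For bidifferentiality and continuity I would start from the near-diagonal asymptotic expansion of the Bergman kernel of $L^{\otimes k}$ (Tian--Zelditch--Catlin, with the refined local formulas of Ma--Marinescu). Writing the composition $T_{L^{\otimes k}}(f)T_{L^{\otimes k}}(g)$ as a pointwise integral against the product of two Bergman kernels, Taylor expanding $f$ and $g$ about the base point, and performing a stationary-phase / coherent-state analysis, one expresses each coefficient $C_j^\omega(f,g)$ as a bidifferential operator whose coefficients are universal polynomial expressions in finitely many derivatives of the \Kahler\ metric---equivalently, in finitely many derivatives of $\omega$ together with $\omega^{-1}$. Uniqueness of the asymptotic expansion in Theorem~\ref{Toeplitz1} identifies this construction with $C_j^\omega$. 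Since $C_j^\omega(f,g)(p)$ then depends on only finitely many derivatives of $\omega$ at $p$, continuity in the $\C^m$ topology for some $m$ depending on $j$, and therefore in the Fr\'echet topology, follows at once.

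For homogeneity I would exploit a self-similarity of the construction under tensor powers. For any positive integer $k$, the line bundle $L^{\otimes k}$ itself has curvature $k\omega$, and its own Berezin--Toeplitz expansion via $(L^{\otimes k})^{\otimes m} = L^{\otimes km}$ must agree, for each $m$, with the original expansion of $T_{L^{\otimes km}}(f)T_{L^{\otimes km}}(g)$. Equating the two asymptotic series in $m$ and invoking the uniqueness part of Theorem~\ref{Toeplitz1} yields
\[
C^{k\omega}_j(f,g) = k^{-j}\, C^\omega_j(f,g)
\]
for every positive integer $k$. Since the first step has expressed $C^\omega_j$ as a universal local formula that is polynomial in the jets of $\omega$ and $\omega^{-1}$, the validity of this identity at every positive integer forces $C^{t\omega}_j = t^{-j} C^\omega_j$ for every real $t > 0$. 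The main obstacle is the first step: rigorously extracting from the Bergman kernel asymptotics the statement that $C_j^\omega$ is a bidifferential operator whose coefficients are universal local polynomials in the jets of $\omega$. This is standard in the Toeplitz-quantization literature but requires careful bookkeeping of which derivatives of the metric enter at each order $j$; once it is in place, both the continuity and the homogeneity scaling are essentially formal consequences.
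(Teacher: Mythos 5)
Your proposal is correct but takes a genuinely different route from the paper's proof. The paper disposes of the theorem almost entirely by citing Karabegov--Schlichenmaier \cite{k-s}: the Berezin--Toeplitz coefficients $C^\omega_j$ are identified with those of a star product with separation of variables, which by Karabegov's classification is determined by a formal $2$-form built locally from $\omega$ and the complex structure; bidifferentiality and continuity are then immediate, and homogeneity is read off from the observation that $\hbar$ and $\omega$ enter the formal $2$-form only in the combination $\hbar^{-1}\omega$. You instead propose a ``hands-on'' derivation via the near-diagonal Bergman kernel expansion (Tian--Zelditch--Catlin, Ma--Marinescu) to show that each $C^\omega_j$ is a universal local polynomial in finitely many jets of $\omega$ and $\omega^{-1}$, and then a clever self-similarity argument for homogeneity: applying the expansion to both $(L^{\otimes k})^{\otimes m}$ and $L^{\otimes km}$ and invoking uniqueness gives $C^{k\omega}_j = k^{-j}C^\omega_j$ for all positive integers $k$, after which the Laurent-polynomial dependence on the scaling parameter extends the identity to all real $t>0$. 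Your homogeneity argument is sound --- the key point that $C^{t\omega}_j$ is a Laurent polynomial in $t$ does follow from the universal local form --- but it is indirect compared with the paper's one-line observation about $\hbar^{-1}\omega$ in the formal $2$-form; conversely, your approach is self-contained once the Bergman kernel asymptotics are in hand, whereas the paper leans entirely on the external classification theorem. Both are legitimate; the paper's is shorter, yours is arguably more illuminating about where the $k^{-j}$ scaling actually comes from.
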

\begin{proof}
This follows immediately from the results of \cite{k-s}, where it is shown that these are the terms of a ``star product with separation of variables'' (although with the order of multiplication reversed) and that the formal $2$-form classifying this product is simply constructed from $\omega$. This implies that $C^\omega_j$ can (in principle) be constructed from the complex structure, $\omega$, and finitely many of its derivatives. This implies the stated continuity.

The homogeneity is because the deformation parameter $\hbar=1/k$ and symplectic form only enter the formal $2$-form in the combination $\hbar^{-1}\omega$, and the star product can be constructed from the formal $2$-form.
\end{proof}
This implies that eq.~\eqref{Toeplitz_bound} can be stated more directly in terms of $\curv L^{\otimes k} = k \omega$ as,
\beq
\label{Toeplitz_bound2}
T_{L^{\otimes k}}(f)T_{L^{\otimes k}}(g) = \sum_{j=0}^N  T_{L^{\otimes k}}[C^{k\omega}_j(f,g)] + \Or(k^{-N-1}) .
\eeq

This approach of taking tensor powers of a fixed line bundle gives a quantization of $\M$ with the symplectic form $\omega$, which is  by definition a closed, type $(1,1)$ differential form, but it is not arbitrary.  Since the cohomology class $[\frac\omega{2\pi}]=c_1(L) \in H^{1,1}(\M)$ is the first Chern class of $L$, it must be integral.

Constructing a quantization for a symplectic form $\omega$ without this integrality property is more subtle.
Instead of taking tensor powers of a fixed line bundle, we can more generally choose some sequence of holomorphic, Hermitian line bundles $\{L_k\}_{k=1}^\infty$, with positive curvatures $\omega_k:=\curv L_k$. Instead of identifying $\hbar$ with $\frac1k$,  take some non-repeating sequence $\{\hbar_k \in\R\}_{k=1}^{\infty}$ with $\lim_{k\to\infty}\hbar_k=0$. Now define
\[
I := \{0,\hbar_k\mid k\in\N\} \subset \R
\]
$A_0:= \C(\M)$, $A_{\hbar_k} := \Li[\Lh(\M,L_k)]$, and for $f\in\C^\infty(\M)$,
\beq
\label{T_Q}
\begin{split}
Q_0(f) &:= f \\
Q_{\hbar_k}(f) &:= T_{L_k}(f) .
\end{split}
\eeq

The idea is to use this to define a quantization.
If the sequence of curvatures is reasonably well behaved, then the leading order approximation to the commutator will be,
\[
[T_{L_k}(f),T_{L_k}(g)]\approx i T_{L_k}(\{f,g\}_{\omega_k}) .
\]
We want this to be (approximately) $i\hbar_k T_{L_k}(\{f,g\}_\omega)$, therefore we need (for any $f,g\in\C^\infty(\M)$)
\[
\{f,g\}_\omega = \lim_{k\to\infty} \hbar_k^{-1}\{f,g\}_{\omega_k} ,
\]
or equivalently,
\[
\omega = \lim_{k\to\infty}\hbar_k\omega_k ,
\]
where the topology on $\Omega^2(\M)$ is the $\C^0$ topology given by a sup-norm defined with an arbitrary metric.
This means that the \Kahler\ metrics given by these curvatures must --- after rescaling --- converge to the \Kahler\ metric given by $\omega$.

Recall that the  Riemann and Ricci curvatures of a \Kahler\ manifold are determined by the metric and are invariant under rescaling the metric.

\begin{prop}
\label{Toeplitz2}
These maps \eqref{T_Q} define a first order quantization of  $\M$ with the symplectic structure $\omega$, if
\begin{itemize}
\item
in the $\C^0$ topology
\beq
\label{omega_converge}
\omega = \lim_{k\to\infty}\hbar_k\omega_k ,
\eeq
\item
the magnitude of the Riemannian curvature of the \Kahler\ structure $\hbar_k\omega_k$ is of order $o(\hbar_k^{-1})$,
\item
and the magnitude of the derivative of the Ricci curvature is of order $o(\hbar_k^{-2})$.
\end{itemize}
In particular, this is true if \eqref{omega_converge} converges in the $\C^3$ topology.
\end{prop}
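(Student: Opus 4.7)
The plan is to verify the three axioms of an order-$1$ strict deformation quantization with $C_1(f,g):=C^\omega_1(f,g)$. Axiom~$(1)$ is immediate, since $\A_0=\C^\infty(\M)$ sits as a dense ${}^*$-subalgebra of $\C(\M)=A_0$. Axiom~$(3)$ will follow from the antisymmetrization identity of Theorem~\ref{Toeplitz1} applied to $\omega$, once Axiom~$(2)$ is in hand. The substantive content is therefore the asymptotic product formula in Axiom~$(2)$.

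For this I would start from a single-bundle version of Theorem~\ref{Toeplitz1}: the asymptotic symbol calculus of \cite{sch,k-s} applies not only to tensor powers of a fixed bundle but to any positive Hermitian holomorphic line bundle $L$, giving
\[
T_L(f)T_L(g)=T_L\bigl[fg+C^{\omega_L}_1(f,g)\bigr]+E(f,g;L),
\]
where the operator-norm remainder admits a quantitative bound in terms of the Riemann curvature of the associated \Kahler\ metric and the first covariant derivative of its Ricci tensor, along with finitely many derivatives of $f$ and $g$. Applied with $L=L_k$, and combined with the homogeneity $C^{\omega_k}_1=\hbar_k C^{\hbar_k\omega_k}_1$ from Theorem~\ref{Continuity}, this reduces Axiom~$(2)$ to showing that $\norm{E(f,g;L_k)}=o(\hbar_k)$ and that $C^{\hbar_k\omega_k}_1(f,g)\to C^\omega_1(f,g)$ uniformly on $\M$.

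The second and third bullet hypotheses are precisely calibrated for the first of these: after rescaling $\omega_k=\hbar_k^{-1}(\hbar_k\omega_k)$, the geometric invariants of $\omega_k$ appearing in the standard remainder estimate translate into those of $\hbar_k\omega_k$ with exactly the powers of $\hbar_k$ needed to absorb them into $o(\hbar_k)$. For the second, the $\C^0$ hypothesis together with uniform control on the Riemann and Ricci geometry of $\hbar_k\omega_k$ bootstraps to convergence in the $\C^m$ topology demanded by Theorem~\ref{Continuity}, giving pointwise --- hence, by compactness of $\M$, uniform --- convergence $C^{\hbar_k\omega_k}_1(f,g)\to C^\omega_1(f,g)$. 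Since $\norm{T_{L_k}(h)}\le\norm{h}_\infty$ uniformly in $k$, the resulting discrepancy contributes only $o(\hbar_k)$ in operator norm after multiplication by $\hbar_k$.

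The main obstacle is the quantitative single-bundle estimate for $E(f,g;L)$. Theorem~\ref{Toeplitz1} packages the $1/k$-dependence opaquely as $\Or(k^{-N-1})$, which is automatic for tensor powers but insufficient for the inequivalent bundles $L_k$; the proof in \cite{sch,k-s} must be reopened to track how the local Bergman-kernel asymptotics depend on the \Kahler\ geometry. The hypotheses of the proposition are designed exactly so that the resulting geometric factors absorb into an error of order $o(\hbar_k)$. The final sentence is then immediate, since $\C^3$ convergence $\hbar_k\omega_k\to\omega$ bounds both the Riemann tensor and the derivative of the Ricci tensor of $\hbar_k\omega_k$ uniformly in $k$, which implies the other two bullets with room to spare.
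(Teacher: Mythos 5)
Your high-level strategy is right: the crux is a quantitative, single-bundle version of the Toeplitz product expansion whose error is controlled explicitly by the Riemann and Ricci geometry, so that the three bullet hypotheses can absorb it into $o(\hbar_k)$. You correctly flag this as ``the main obstacle'' and say the proofs of \cite{sch,k-s} ``must be reopened'' --- but you then leave it open. The paper does not reopen those proofs; it cites \cite[Lem.~4.8]{haw8}, which already supplies exactly the estimate you need:
\[
T_{L_k}(f)T_{L_k}(g) = T_{L_k}\!\left[fg + iC_1^{\omega_k}(f,g)\right] + o(\hbar_k),
\]
with a remainder quantitatively governed (in the notation there) by $K_2$ (Riemann) and $\hat K$ (derivative of Ricci), taken with respect to the rescaled metric $\hbar_k\omega_k$ --- precisely the quantities constrained by bullets two and three. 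Without this reference (or a proof of the estimate from scratch), your argument has a hole where the proposition's content actually sits.

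A second gap: you verify the three algebraic axioms of the definition but never establish that the maps \eqref{T_Q} land in $\Gamma(I,A)$ for \emph{some} continuous field $A$ --- the existence and uniqueness of the continuous field is part of what must be proved. The paper handles this by using \eqref{T_Poisson} to deduce (via \cite[Lem.~7.9]{haw8}) that the normalized trace of $T_{L_k}(f)$ converges to the normalized integral of $f$, and then invoking \cite[Thm.~7.10]{haw8} to get the continuous field from this and \eqref{T_0th}. Your observation that $\norm{T_{L_k}(h)}\le\norm{h}_\infty$ is not enough on its own.

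Finally, a smaller point: your ``bootstrap from $\C^0$ convergence plus curvature control to $\C^m$ convergence'' is both unneeded and unjustified. For order~1 you don't need Theorem~\ref{Continuity}'s general $\C^m$-continuity; $iC_1^{\omega_k}(f,g)$ is explicitly the contraction of $\partial f$ with $\bar\partial g$ against the (inverse of the) metric defined by $\omega_k$, and uniform convergence of $C_1^{\omega_k}(f,g)\to C_1^{\omega}(f,g)$ follows already from the $\C^0$ hypothesis \eqref{omega_converge}. The claimed bootstrap to higher-order convergence does not follow from $\C^0$ convergence together with mere $o(\hbar_k^{-1})$ and $o(\hbar_k^{-2})$ bounds on curvature, which control size but not convergence.
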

\begin{proof}
The calculations in \cite[Lem.~4.8]{haw8} show that for any $f,g\in\C^\infty(\M)$,
\beq
\label{T_1sta}
T_{L_k}(f)T_{L_k}(g) = T_{L_k}\left[fg + iC_1^{\omega_k}(f,g)\right] + o(\hbar_k) ,
\eeq
where $iC_1^{\omega_k}(f,g)$ is the contraction of the holomorphic derivative of $f$ with the antiholomorphic derivative of $g$ using the \Kahler\ metric defined by $\omega_k$. In the notation of \cite{haw8}, $s=\hbar_k^{-1}$, the norms are taken using the rescaled \Kahler\ structure $\hbar_k\omega_k$, $\hat K$ is constructed from the Ricci curvature, and $K_2$ is constructed from the Riemann tensor.

(Alternately, we can take $s=1$. In that case, the norms are taken with respect to the \Kahler\ structure determined by $\omega_k$. This means that the norm of the derivative of $f$ is of order $\Or(\hbar_k)$, the norm of the second derivative is of order $\Or(\hbar_k^2)$, and the norms of the Riemann tensor and its derivative are rescaled by $\hbar_k$ and $\hbar_k^2$, respectively.)

Since we are assuming (eq.~\eqref{omega_converge}) that $\hbar_k\omega_k= \omega + o(1)$, the approximation \eqref{T_1sta} is equivalent to
\beq
\label{T_1st}
T_{L_k}(f)T_{L_k}(g) = T_{L_k}\left[fg + i\hbar_k C_1^{\omega}(f,g)\right] + o(\hbar_k) .
\eeq
In particular,
\beq
\label{T_0th}
T_{L_k}(f)T_{L_k}(g) = T_{L_k}(fg) + \Or(\hbar_k) ,
\eeq
and
\beq
\label{T_Poisson}
[T_{L_k}(f),T_{L_k}(g)] = i\hbar_k T_{L_k}(\{f,g\}_\omega) + o(\hbar_k) .
\eeq
By the reasoning in \cite[Lem.~7.9]{haw8}, eq.~\eqref{T_Poisson} implies that the normalized trace of $T_{L_k}(f)$ converges to the normalized integral of $f$. By the reasoning in \cite[Thm.~7.10]{haw8}, this and eq.~\eqref{T_0th} imply that $Q$ \eqref{T_Q} does define sections of a unique continuous field over $I$. Finally, eq.~\eqref{T_Poisson} is the statement that this is a quantization for the symplectic structure $\omega$, and eq.~\eqref{T_1st} is the statement that this is a first order quantization.

In particular, if $\hbar_k\omega_k$ converges in the $\C^3$ topology, this implies that the Riemann tensor and its derivative converge (and are bounded) in the $\C^0$ topology.
\end{proof}

The question now is how well behaved the sequence $\{\omega_k\}_{k=1}^\infty$ must be to give an order $N$ quantization.

First, with eq.~\eqref{Toeplitz_bound} in mind, let's suppose that the sequence is sufficiently well behaved that
\beq
\label{T_expansion}
T_{L_k}(f)T_{L_k}(g) = \sum_{j=0}^N  T_{L_k}[C^{\omega_k}_j(f,g)] + o(\hbar_k^N) ,
\eeq
where $C^{\omega_k}_j$ is as defined in Theorem~\ref{Continuity}.

We want an expansion of the form,
\[
T_{L_k}(f)T_{L_k}(g) = \sum_{j=0}^N  \hbar_k^jT_{L_k}[C_j(f,g)] + o(\hbar_k^N) .
\]
So, we need to approximate $C^{\omega_k}_j(f,g)$ by a polynomial in $\hbar_k$.
Because of the homogeneity of $C^{\omega_k}_j$, this approximation can be achieved by assuming that $\omega_k$ is approximated by a Laurent polynomial in $\hbar_k$. We have already assumed that $\omega_k\approx \omega \hbar_k^{-1}$, so this must now be corrected with nonnegative powers of $\hbar_k$.

The remaining question is to what order $\omega_k$ needs to be approximated by a Laurent polynomial in order to give an order $N$ quantization.

If $\omega_k$ is approximated to order $o(\hbar_k^n)$ by a Laurent polynomial $\omega \hbar_k^{-1}+\dots$, then $\omega_k^{-1}$ is approximated to order $o(\hbar_k^{n+2})$ by a polynomial $\omega^{-1}\hbar_k+\dots$.

So, to construct an order $N$ quantization of $\M$ with the symplectic form $\omega$, we need a sequence of line bundles $L_k$ and a sequence of numbers $\hbar_k$, such that the curvature of $L_k$ is approximated to order $o(\hbar_k^{N-2})$ in the Fr\'echet topology by a Laurent polynomial $\omega\hbar_k^{-1}+\dots\in \Omega^2(\M)[\hbar_k^{-1},\hbar_k]$.

This is more than enough to satisfy the hypotheses of Proposition~\ref{Toeplitz2}. It seems quite plausible that this is enough to satisfy the assumption \eqref{T_expansion}, but proving that would require generalizing most of the results in the book \cite{b-g}.

Finally, to achieve an infinite order quantization, we must satisfy these conditions for all $N$. This translates to the existence of an asymptotic expansion for $\omega_k$ as a Laurent series in $\hbar_k$.

The main point here is that it is not trivial to find such a sequence of line bundles, because the curvature of a line bundle is not arbitrary, but must determine an integral cohomology class, $c_1(L_k) = [\frac{\omega_k}{2\pi}]$.

This sequence of integral Dolbeault cohomology classes
\[
c_1(L_k) \in H^{1,1}(\M)
\]
has the property that it can be approximated to order $o(\hbar_k^{N-2})$ by a Laurent polynomial in $H^{1,1}(\M)[\hbar_k^{-1},\hbar_k]$ with leading term $[\frac\omega{2\pi}]\hbar_k^{-1}$.

The sequence of numbers $\hbar_k$ doesn't really carry any additional information here. If such a sequence exists, then a valid one can easily be determined from $c_1(L_k)$. It is the ratios between components that are interesting here.

The simplest nontrivial case occurs when $\dim H^2(\M)=2$, so let's consider that case and identify $H^{1,1}(\M)=\R^2$. The integral part of Dolbeault cohomology is identified with $\Z^2\subset\R^2$.

Suppose that $[\frac\omega{2\pi}] = (\alpha,1)$ for some real number $\alpha\in\R$. Denote the Chern classes by $c_1(L_k) = (r_k,s_k)$.

The condition on the second component is
\[
s_k = \hbar_k^{-1}+\dots + o(\hbar_k^{N-2})
\]
which is easily satisfied by choosing $\hbar_k= s_k^{-1}$.

With this choice, the condition on the first component becomes
\[
\begin{split}
r_k &= \alpha \hbar_k^{-1} + \dots + o(\hbar_k^{N-2}) \\
&= \alpha s_k + \gamma_1 + \gamma_2 s_k^{-1} + \dots + \gamma_{N-1} s_k^{-N+1} + o(s_k^{-N+2}) ,
\end{split}
\]
for some real numbers $\gamma_1,\dots,\gamma_{N-1}\in\R$. Equivalently,
\[
\frac{r_k}{s_k} = \alpha + \gamma_1 s_k^{-1} + \dots + \gamma_{N-1} s_k^{-N+1} + o(s_k^{-N+1}) .
\]
In other words, the set of pairs $(r_k,s_k)$ must be an order $N-1$ rational approximation to the real number $\alpha$.

Likewise, for an infinite order quantization, we need an infinite order rational approximation.

\subsection{An Obstruction}
\begin{definition}
A \emph{formal deformation quantization} \cite{bffls,wei} of a Poisson manifold $\M$ is an associative $\co[[\hbar]]$-linear product on the space of formal power series $\C^{\infty}(\M)[[\hbar]]$ of the form
\[
f * g = fg + \sum_{j=1}^\infty \hbar^j C_j(f,g)
\]
where $C_1(f,g)-C_1(g,f) = i\{f,g\}$.
\end{definition}

Suppose that $\A_0\subseteq\C^\infty_b(\M)$ is a Poisson subalgebra of bounded smooth functions, whose restriction to any compact coordinate patch gives all smooth functions there.
Any infinite order strict deformation quantization $(I,A,Q)$ of $\A_0$ determines a formal deformation quantization: For any $f,g\in \A_0$,
\[
Q_\hbar(f) Q_\hbar(g) \sim Q_\hbar(f*g) ,
\]
where $\sim$ means that for any $N\in\N$ if the formal power series on the left is truncated at order $\hbar^N$, then the norm of the difference of the two sides is bounded by a multiple of $\hbar^{N+1}$.

When $\M$ is symplectic, any formal deformation quantization determines \cite{fed2,fed3,n-t1} a characteristic cohomology class $\theta \in \hbar^{-1} H^2(\M)[[\hbar]]$ which is given to leading order by the symplectic form as
\[
\theta = \frac{[\omega]}{2\pi \hbar} + \dots .
\]
(This is related to Fedosov's notation by $\theta = -\frac{\Omega}{2\pi \hbar}$.)
Two formal deformation quantizations determine the same cohomology class if and only if they are isomorphic by an isomorphism that reduces modulo $\hbar$ to the identity on $\C^\infty(\M)$.

Let $n:=\frac12\dim\M$.
Any formal deformation quantization of a symplectic manifold admits a natural $\co[[\hbar]]$-linear trace
\[
\Tr : \C^{\infty}_c(\M)[[\hbar]] \to \hbar^{-n} \co[[\hbar]] .
\]
This natural trace is given to leading order by the symplectic volume form,
\[
\Tr f = \frac1{n!\hbar^n}\int_M f \,\omega^{n}+ \dots .
\]

This trace is the subject of the algebraic index theorem.  Let 
\[
e_0=e_0^2 \in \Mat_m[\C^\infty(\M)]
\]
be an idempotent matrix of smooth functions. Under the $*$-product, it is only approximately idempotent (modulo $\hbar$).
However, this can be corrected to a $*$-product idempotent $e_\hbar$, such that $e_\hbar\equiv e_0 \mod \hbar$.

Suppose, for simplicity, that $\M$ is compact. The trace of the $*$-product naturally extends to matrices, so $\Tr e_\hbar$ is a meaningful expression, and this is what the algebraic index theorem computes. To state it, we need one more definition: Since $e_0$ is an idempotent matrix of functions, it determines a vector subbundle of $\co^m\times\M$, whose fiber at $x\in\M$ is the image $e_0(x) \co^m$; write $\ch e_0$ for the Chern character of this bundle.

\begin{thm}
\label{Algebraic_index}
Let $*$ be any formal deformation quantization of a compact symplectic manifold $\M$, with characteristic class $\theta$. Let $e_0\in\Mat_m[\C^\infty(\M)]$ be any idempotent. For any $*$-idempotent $e_\hbar\equiv e_0 \mod \hbar$, the trace is
\[
\Tr e_\hbar = \int_M \ch e_0 \wedge e^\theta\wedge \hat A(TM) .
\]
\end{thm}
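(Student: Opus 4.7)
The plan is to follow the standard proofs due to Fedosov \cite{fed2} and Nest--Tsygan \cite{n-t1}. The argument splits into three main steps.

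First, I would verify that $\Tr e_\hbar$ depends only on the K-theory class of $e_0$. If $e_\hbar$ and $e'_\hbar$ are two $*$-idempotents with the same classical limit, a standard iterative argument produces an invertible element $u = 1 + O(\hbar)$ in $\Mat_m[\C^\infty(\M)[[\hbar]]]$ with $u * e_\hbar * u^{-1} = e'_\hbar$; the trace property then gives $\Tr e_\hbar = \Tr e'_\hbar$. Likewise a smooth path $e_0(t)$ of classical idempotents can be lifted to a path of $*$-idempotents, and the derivative of $\Tr e_\hbar(t)$ lies in the image of $*$-commutators and hence vanishes under $\Tr$. The right-hand side is manifestly a topological invariant, so both sides descend to $\co[[\hbar]]$-linear functionals on $K^0(\M)$.

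Second, I would tensor with $\co$ and use the isomorphism $K^0(\M)\otimes\co \cong H^{\mathrm{even}}(\M,\co)$ furnished by the Chern character. Both sides are now additive functionals on a finite-dimensional vector space, so it suffices to verify the identity on a set of generators. A standard bordism/localization argument lets one reduce to a virtual bundle supported in a tubular neighborhood of an embedded point, so the problem collapses to a computation in a symplectic Darboux chart with a model idempotent.

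Third, I would carry out the local computation using Fedosov's framework: construct a flat connection $\nabla = d + [A,\cdot]_*$ on the Weyl algebra bundle $W(\M)$ whose curvature encodes $\theta$, and identify the $*$-algebra with flat sections of $W(\M)$. The trace can then be expressed as the integral over $\M$ of a fiberwise Weyl-algebra trace twisted by $e^\theta$ and by the Chern--Weil form associated to a chosen symplectic connection; a Getzler-style rescaling on the Weyl algebra collapses the fiber trace to the $\Ahat$-genus of $TM$, yielding the right-hand side.

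The hardest step will be the local Weyl-algebra computation in step three. One must track normalizations carefully --- the sign and factor of $2\pi\hbar$ relating $\theta$ to Fedosov's $\Omega$, the choice of Weyl ordering, and the sign conventions for $\Ahat(TM)$ --- and verify that the Getzler rescaling produces the genus with the correct coefficients. Steps one and two are essentially formal consequences of the trace property and K-theoretic functoriality, whereas step three requires either the full machinery of formal geometry (Fedosov) or a cyclic-homology pairing computation (Nest--Tsygan) to extract the characteristic class integrand.
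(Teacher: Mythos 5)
The paper does not prove Theorem~\ref{Algebraic_index}; it is quoted without argument as a known consequence of the algebraic index theorem of Fedosov \cite{fed2,fed3} and Nest--Tsygan \cite{n-t1}, so there is no in-paper proof to compare against. Your outline is a fair summary of Fedosov's Weyl-bundle argument: step one (invariance of $\Tr e_\hbar$ under $*$-conjugation and under homotopy of classical idempotents, reducing the left-hand side to an additive functional on $K^0(\M)$) and step three (the flat Fedosov connection on $W(\M)$, the fiberwise symplectic trace, and the Getzler rescaling producing $\hat A(TM)$) are exactly the ingredients of the cited proofs, and your list of bookkeeping hazards at the end --- the sign and $2\pi\hbar$ normalization relating $\theta$ to Fedosov's $\Omega$, the ordering convention, the sign in $\hat A$ --- is accurate.

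The step that would need real justification is step two. A ``bordism/localization'' reduction to a model idempotent supported in a Darboux ball is borrowed from the topological proof of Atiyah--Singer and does not transfer cleanly here: the class $\theta$ is global and is trivialized in any contractible chart, so a naively localized computation would recover only $\int_M \ch e_0 \wedge \hat A(TM)$ and lose the $e^\theta$ factor unless one carefully tracks how the local trivialization of the deformation twists against the global one. There is also no convenient bordism category of compact symplectic manifolds in which both sides are visibly invariant. Neither Fedosov nor Nest--Tsygan argues this way. Fedosov expresses $\Tr$ globally as the integral over $\M$ of a canonical density built from a fiberwise Weyl trace and then identifies that density as a characteristic form by a local normal-coordinate computation --- so the genuine local-to-global work is already happening in your step three, not in a prior excision step. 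Nest--Tsygan instead compute the cyclic cocycle implementing $\Tr$ via Lie algebra (Gelfand--Fuks) cohomology of the formal Weyl algebra. Either of these should replace your step two; as written it is the weak link in the outline.
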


Fedosov \cite{fed3} has applied this theorem to find a constraint on ``asymptotic operator representations'' of formal deformation quantizations when $\theta = [\frac\omega{2\pi}]$. His notion of an asymptotic operator representation of a formal deformation quantization is almost equivalent to an infinite order strict deformation quantization corresponding to the given formal deformation quantization. The following is a simple adaptation of Fedosov's result.
\begin{thm}
\label{Asymptotic}
Let $\M$ be a compact symplectic manifold and $(I,A,Q)$ an infinite order strict deformation quantization of $\C^\infty(\M)$. Let $\theta$ and $\Tr$ be the characteristic class and trace of the corresponding formal deformation quantization. Suppose that for each $\hbar\neq0\in I$, $A_\hbar$ is represented on a finite-dimensional Hilbert space and that the operator trace $\tr$ in those representations is related to the formal trace by, for any $f\in\C^\infty(\M)$,
\beq
\label{asymptotic_trace}
\tr Q_\hbar(f) \sim \Tr f .
\eeq
Let $c_1(\omega)$ be the first Chern class of the holomorphic tangent bundle determined by any almost complex structure compatible with the symplectic form. Then
\[
\theta + \tfrac12 c_1(\omega) \in \hbar^{-1} H^2(\M)[[\hbar]]
\]
is the asymptotic expansion of a map from $I\smallsetminus \{0\}$ to integral \deRham\ cohomology.
\end{thm}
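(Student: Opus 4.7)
The plan is to apply the algebraic index theorem (Theorem~\ref{Algebraic_index}) to $*$-idempotents built from vector bundles on $\M$, and to use the fact that in a finite-dimensional representation the operator trace of any genuine projection is an integer. Matching this integer against the algebraic-index formula will force the class $\theta + \tfrac12 c_1(\omega)$ to be the asymptotic expansion of an integral-cohomology-valued function of $\hbar \in I\smallsetminus\{0\}$.

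First I would take any smooth complex vector bundle $E \to \M$, realized as the image of a self-adjoint idempotent $e_0 \in \Mat_m(\C^\infty(\M))$, and construct the formal $*$-idempotent $e_\hbar = e_0 + \Or(\hbar)$ by the standard order-by-order argument. For each $N$, the self-adjoint truncation $e_\hbar^{(N)}$ satisfies $e_\hbar^{(N)} * e_\hbar^{(N)} - e_\hbar^{(N)} = \Or(\hbar^{N+1})$, so by axiom~(2) of strict deformation quantization (applied to matrix entries), $Q_\hbar(e_\hbar^{(N)}) \in A_\hbar \otimes \Mat_m$ is self-adjoint with $Q_\hbar(e_\hbar^{(N)})^2 - Q_\hbar(e_\hbar^{(N)}) = o(\hbar^N)$ in norm. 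Since $A_\hbar \otimes \Mat_m$ is finite-dimensional, holomorphic functional calculus produces a genuine projection $p_\hbar^{(N)}$ within $o(\hbar^N)$ of $Q_\hbar(e_\hbar^{(N)})$ in operator norm, and $\tr p_\hbar^{(N)} \in \Z$.

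Using $\dim \Hi_\hbar = \Or(\hbar^{-n})$, we obtain $|\tr p_\hbar^{(N)} - \tr Q_\hbar(e_\hbar^{(N)})| = o(\hbar^{N-n})$. Combining this with the hypothesis \eqref{asymptotic_trace} (applied entrywise) and the algebraic index theorem, the formal series
\[
\int_\M \ch(E) \wedge e^\theta \wedge \hat A(T\M)
\]
is then the asymptotic expansion, as $\hbar \to 0$ in $I\smallsetminus\{0\}$, of the integer-valued function $\hbar \mapsto \tr p_\hbar^{(N)}$, to arbitrarily high order as $N \to \infty$. This conclusion holds for every vector bundle $E \to \M$.

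To pass from this to an integral $H^2$-class, I would use the Chern-root identity $\hat A(T\M) = \td(T^{1,0}\M) \cdot e^{-c_1(\omega)/2}$ to rewrite the integrand as $\ch(E) \wedge e^{\theta - c_1(\omega)/2} \wedge \td(T^{1,0}\M)$. Since $\int_\M \ch(E)\,\td \in \Z$ by Hirzebruch--Riemann--Roch, specializing $E$ to line bundles with Chern class ranging over $H^2(\M, \Z)$ and inducting on the order in $\hbar$ produces, for each $\hbar \in I\smallsetminus\{0\}$, an integral class $\mu'(\hbar) \in H^2(\M, \Z)$ whose asymptotic expansion matches $\theta - c_1(\omega)/2$; then $\mu(\hbar) := \mu'(\hbar) + c_1(\omega) \in H^2(\M, \Z)$ has asymptotic expansion $\theta + c_1(\omega)/2$, as claimed. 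The main obstacle is this final cohomological step: upgrading integer-valued pairings against Chern characters of line bundles to an actual integral class requires inverting the Chern character modulo torsion, via Poincar\'e duality and the rational surjectivity of $\ch : K^0(\M) \otimes \Q \to H^{\mathrm{even}}(\M, \Q)$, and using Riemann--Roch integrality to absorb the Todd versus $\hat A$ discrepancy at each asymptotic order.
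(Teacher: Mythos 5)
You follow the same overall strategy as the paper: apply the algebraic index theorem (Theorem~\ref{Algebraic_index}) to $*$-idempotents, extract integrality from the integer-valued operator traces of genuine projections, factor $\hat A(T\M)$ through the Todd class of the holomorphic tangent bundle, and close by $K$-theoretic Poincar\'e duality. Two places where your execution diverges are worth flagging. Where the paper cites \cite[Lem.~5.3]{haw11} to produce a single idempotent section $e\in\Mat_m[\Gamma(I,A)]$ with prescribed asymptotic expansion, you instead truncate the formal $*$-idempotent at each order $N$ and correct $Q_\hbar(e_\hbar^{(N)})$ to a genuine projection $p_\hbar^{(N)}$ by holomorphic functional calculus in the finite-dimensional algebra $A_\hbar\otimes\Mat_m$. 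This is a legitimate, more self-contained variant: the bound $\dim\Hi_\hbar=\Or(\hbar^{-n})$ you need already follows from hypothesis~\eqref{asymptotic_trace} with $f=1$, though you should also observe that the rank of $p_\hbar^{(N)}$ stabilizes in $N$ for $\hbar$ small enough, so you are genuinely expanding one integer-valued function of $\hbar$.

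The one real gap is in the final cohomological step. Specializing $E$ to line bundles alone does not suffice: pairing $e^{\theta-c_1/2}$ against $\ch(L)\wedge\td$ as $L$ ranges over $\Pic(\M)$ need not detect integrality of a class in $H^2(\M;\R)$, because Chern characters of line bundles do not in general span $K^0(\M)\otimes\Q$. Your closing remark --- invoke rational surjectivity of $\ch:K^0(\M)\otimes\Q\to H^{\mathrm{even}}(\M;\Q)$ together with Poincar\'e duality --- is the right fix, but it forces you to let $E$ range over all idempotents, which is exactly what the paper makes explicit via the $K$-homology orientation $\varepsilon\in K_0(\M)$: the classes $[e_0]\cap\varepsilon$ sweep out $K_0(\M)$, and a class $\sigma\in H^2(\M;\R)\cong\Pic(\M)\otimes\R$ is integral iff it pairs integrally with all of $K_0(\M)$. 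Once the line-bundle restriction is dropped, your argument and the paper's coincide (the opposite sign you obtain in the factorization of $\hat A$ is immaterial, since $c_1(\omega)$ is itself integral).
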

\begin{proof}
If $e_0\in\Mat_m[\C^\infty(\M)]$ is any idempotent, then there exists \cite[Lem.~5.3]{haw11} an idempotent section $e=e^2\in \Mat_m[\Gamma(I,A)]$ such that $e(0)=e_0$ and which has an asymptotic expansion $e_\hbar\in\C^\infty(\M)[[\hbar]]$:
\[
Q_\hbar[e(\hbar)] \sim Q_\hbar(e_\hbar) .
\]
The condition \eqref{asymptotic_trace} implies that
\[
\tr e(\hbar) \sim \Tr e_\hbar .
\]

The matrix $e_\hbar$ is automatically an idempotent with $e_\hbar\equiv e_0\mod \hbar$, so Theorem~\ref{Algebraic_index} applies and tells us that
\[
\rk e(\hbar) = \tr e(\hbar) \sim \int_M \ch e_0 \wedge e^\theta\wedge \hat A(TM) .
\]
The left side is obviously integer-valued.

Let $J$ be an almost complex structure compatible with $\omega$. Let $T_J\M$ be the corresponding holomorphic tangent bundle, so that $c_1(T_JM)=c_1(\omega)$. The  $\hat A$ class can be factorized as $\hat A(TM) = e^{\frac12 c_1(\omega)}\wedge \td(T_JM)$, so for any idempotent $e_0$,
\[
\int_M \ch e_0 \wedge e^\theta\wedge \hat A(TM) = \int_M \ch e_0 \wedge e^{\theta+\frac12 c_1(\omega)}\wedge \td(\omega)
\]
is asymptotically integral.

The bundle $\Wedge^*T_J^*\M$ is a spinor bundle and defines a $\mathrm{Spin}^c$-structure on $\M$, which defines an orientation class $\varepsilon\in K_0(\M)$.
By the Atiyah-Singer index theorem, $\int_M \dots \wedge \td(\omega)$ is $\ch \varepsilon$, the Chern character of $\varepsilon$.



The Picard group (of complex line bundles) $\Pic(\M)$ is a multiplicative subgroup of the ring $K^0(\M)$. It can also be identified with $H^2(\M;\Z)$. Taking the Chern character is equivalent to exponentiating; i.e., there is a commutative diagram:
\[
\begin{CD}
H^2(\M;\Z)  @=\Pic(\M)\\
@V{\exp}VV @VVV \\
H^{\mathrm{ev}}(\M) @<{\ch}<< K^0(\M) .
\end{CD}
\]
So, for any $\sigma \in H^2(\M) = \Pic(\M)\otimes \R$,
\[
\begin{split}
\int_M \ch e_0 \wedge e^\sigma \wedge \td (T_JM) &= \langle \ch e_0 \wedge e^\sigma,\ch \varepsilon\rangle = \langle \ch e_0\wedge \ch \sigma,\ch \varepsilon\rangle \\
&= \langle [e_0]\cup \sigma,\varepsilon\rangle = \langle \sigma,[e_0]\cap \varepsilon\rangle .
\end{split}
\]

Since $\varepsilon\in K_0(\M)$ is an orientation, by Poincar\'e duality, any class in $K_0(\M)$ is the cap product of $\varepsilon$ with a class in $K^0(\M)$, and any class in $K^0(\M)$ is a formal difference of projections. Furthermore, $\sigma \in H^2(\M) = \Pic(\M)\otimes \R$ is integral if and only if it pairs integrally with any class in $K_0(\M)$.

Now, let $\sigma$ be the partial sum of $\theta + \frac12 c_1(\omega)$ up to order $\hbar^N$. This shows that $\Norm{\langle \sigma,[e_0]\cap \varepsilon\rangle} = \Or(\hbar^{N+1})$. (The double bars again denote the distance from the integers.) Since this is true for any $e_0$, this implies that the nonintegral part of $\sigma$ is of order $\Or(\hbar^{N+1})$.
\end{proof}

This is not a completely general result, because of the assumption that each $A_{\hbar\neq 0}$ is represented on a finite-dimensional Hilbert space. This is not true for the example of the noncommutative torus.

To see how this relates to rational approximations, again consider the simplest case, when $H^2(\M;\Z)\cong \Z^2$ and suppose that $[\frac\omega{2\pi}]=(\alpha,1)$.
Theorem \ref{Asymptotic} tells us that
\[
\theta + c_1(\omega) = (\alpha,1)\hbar^{-1}+\dots
\]
is the asymptotic expansion of some map $(r,s) : I\smallsetminus\{0\} \to \Z^2$. The second component of $\theta + c_1(\omega)$ is a formal Laurent series consisting of $\hbar^{-1}$ and nonnegative powers of $\hbar$. This can be functionally inverted and inserted into the first component. That is, $r$ can be written as a formal power series in $s$. This power series is the asymptotic expansion of $r$ in terms of $s$, so the range of $(r,s)$ is an infinite order rational approximation to $\alpha$.

\section{Examples}
\label{Examples}
\subsection{The Rational Case}
Suppose that $\alpha = a/b$ where $a,b\in\Z$ and $\gcd(a,b)=1$.

The obvious infinite order rational approximation to this is $\{(ka,kb)\mid k\in\Z\}$. This can also be modified by adding integer constants. In fact, that is all that we can do.

\begin{prop}
\label{Rational uniqueness}
Let $\alpha = a/b$ with $a,b\in\Z$. 
If $\RR$ is a first order rational approximation to $\alpha$, then there exists $d\in\Z$ such that
\beq
\label{linear_approximation}
br = as + d
\eeq
for all but finitely many $(r,s)\in\RR$. Moreover, $\RR$ is an infinite order rational approximation.
\end{prop}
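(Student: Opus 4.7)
The plan is to exploit the fact that a \emph{first order} rational approximation, when multiplied through by $b$, yields an integer-valued quantity with a real limit; such a quantity must eventually be constant. Once that constant is identified with $d$, the relation $br = as + d$ will hold exactly for cofinitely many pairs, and all higher order expansion coefficients will be forced to vanish, delivering the infinite order statement for free.

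First I would unpack the definition. The hypothesis $r/s = \alpha + \gamma_1 s^{-1} + o([\abs r + \abs s]^{-1})$ rearranges to
\[
r - \alpha s = \gamma_1 + o(1), \qquad (r,s)\in\RR,\ \abs r + \abs s \to \infty.
\]
Because $r/s \to \alpha$, the ratio $\abs r/\abs s$ is eventually bounded, so in particular $\abs s\to\infty$ along $\RR$ (outside finitely many points) and $\abs r + \abs s$ is comparable to $\abs s$. Multiplying by $b$ gives
\[
br - as = b\gamma_1 + o(1).
\]
The left-hand side is an integer for every $(r,s)\in\RR$. Applying the $o(1)$ bound with tolerance $\tfrac12$ produces an $M$ such that $\abs{br - as - b\gamma_1} < \tfrac12$ whenever $\abs r + \abs s > M$; there are only finitely many lattice points in the complementary region, and for each of the remaining $(r,s)\in\RR$ the integer $br-as$ must equal the unique nearest integer $d$ to $b\gamma_1$. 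Passing to the limit along the (infinite) cofinite subset forces $b\gamma_1 = d$, so in particular $b\gamma_1\in\Z$, and we have established \eqref{linear_approximation} on the cofinite set.

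Second, to see that $\RR$ is an infinite order rational approximation, I would simply divide $br = as + d$ by $bs$ on the cofinite subset to obtain the \emph{exact} identity
\[
\frac{r}{s} = \alpha + \frac{d}{b}\cdot s^{-1}.
\]
Setting $\gamma_1 = d/b$ and $\gamma_j = 0$ for $j\ge 2$, every tail $r/s - \alpha - \gamma_1 s^{-1} - \cdots - \gamma_N s^{-N}$ is identically zero for all but finitely many $(r,s)\in\RR$, hence trivially $o([\abs r + \abs s]^{-N})$ for every $N$. Since the asymptotic condition is insensitive to a finite set of exceptions, this verifies Definition~\ref{Rational definition} to arbitrary order $N$.

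The only subtle point is the initial step, namely justifying that an $o(1)$ integer-valued quantity on an infinite subset of $\Z^2$ is eventually constant; this is a quick compactness argument using that each bounded region of $\Z^2$ is finite. The rest is purely mechanical, and no continued fraction machinery is required because the rationality of $\alpha$ short-circuits the usual error analysis.
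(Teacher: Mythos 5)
Your proposal is correct and takes essentially the same approach as the paper: multiply through by $b$ to obtain the integer-valued quantity $br-as = b\gamma_1 + o(1)$, conclude it is eventually constant equal to the integer $d=b\gamma_1$, and then read off the exact identity $r/s = \alpha + (d/b)s^{-1}$ giving the infinite order statement with $\gamma_j=0$ for $j\ge 2$. You are slightly more explicit than the paper in observing that $|r|+|s|$ is comparable to $|s|$ along $\RR$ and in spelling out the nearest-integer argument, but there is no substantive difference.
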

\begin{proof}
Being a first order rational approximation means that there exists a real number $\gamma_1\in\R$ such that for all $(r,s)\in\RR$,
\[
\frac{r}{s} = \frac{a}{b} + \frac{\gamma_1}{s} + o(s^{-1}) .
\]
Multiplying by $s$ and $b$ gives,
\beq
\label{multiplied_out}
b r = a s + \gamma_1 b + o(1) .
\eeq
Since the first two terms are integers, this means that
\[
\Norm{\gamma_1 b} = o(1) ,
\]
where $\Norm{\;\cdot\;}$ again denotes the distance from $\Z$. However, since the left hand side is a constant, this shows that $\Norm{\gamma_1 b}=0$, that is $d:= \gamma_1 b \in \Z$.

Inserting this back into eq.~\eqref{multiplied_out} gives that $b r = a s + d + o(1)$, but since the first 3 terms are integers, the error $o(1)$ must be $0$ for $\abs r + \abs s$ sufficiently large. This gives eq.~\eqref{linear_approximation}.

Since
\[
\frac{r}{s} = \frac{a}{b} + \frac{d}{b s} ,
\]
this satisfies the definition of an infinite order rational approximation, with coefficients $\gamma_j=0$ for $j\geq2$.
\end{proof}

\subsection{Quadratic irrationals}
First consider the ``golden ratio'' $\phi := \frac{1+\sqrt5}2$. Its continued fraction expansion is simply $\phi = [1;1,1,\dots]$. The partial quotients are  $a_n=1$ for all $n$, so eq.~\eqref{cffact1} shows that the principal convergents are given by Fibonacci numbers,
\[
p_n = F_{n+2} \quad\text{and}\quad q_n = F_{n+1} ,
\]
which are defined recursively by $F_0=0$, $F_1=1$, and
\[
F_n = F_{n-1} + F_{n-2} ,
\]
or explicitly as
\beq
\label{fib explicit}
F_n = \tfrac1{\sqrt5}[\phi^n-(-\phi)^{-n}] .
\eeq

The golden ratio is a root of the polynomial equation $\phi^2-\phi-1=0$, so consider the related homogeneous polynomial $r^2-rs-s^2$. Equation~\eqref{fib explicit} shows that consecutive Fibonacci numbers satisfy
\[
F_{n+1}^2-F_{n+1}F_n - F_n^2 = (F_{n+1}-\phi F_n)(F_{n+1}+\phi^{-1}F_n)= (-1)^n .
\]
This shows that,
\[
\Abs{\frac{F_{n+1}}{F_n} - \phi} \leq \frac{1}{\phi F_n^2} ,
\]
for $n\geq1$, so the set of principal convergents gives a first order rational approximation.
However, it is not a second order rational approximation, because
\[
\frac{F_{n+1}}{F_n} - \phi \approx \frac{(-1)^n}{\sqrt5 F_n^2}
\]
is not a nice function of $F_n$.

Instead, this alternates between two nice functions of $F_n$, so let
\[
\RR := \{ (F_{2k+1},F_{2k}) \mid k\in\N\} .
\]
These pairs of numbers are generated by starting from $(2,1)$ and applying the recursion 
\beq
\label{recursion}
(r,s)\mapsto (2r+s,r+s) .
\eeq
These satisfy
\[
r^2-rs-s^2 = 1 ,
\]
so this set is just
\[
\RR = \{(r,s)\in\N^2 \mid r^2-rs-s^2=1\} .
\]
In this case, $r$ is an algebraic function of $s$,
\[
r = \frac{s + \sqrt{5s^2+4}}2 = \frac{1 + \sqrt{5+4s^{-2}}}2 s.
\]
For $s>\frac{\sqrt5}2$, this is given exactly by a Laurent series,
\[
r = \phi s + \frac{\sqrt5}2\sum_{j=1}^\infty \frac{(-\frac85)^j}{j! (2j-1)!!} s^{1-2j} ,
\]
therefore this $\RR$ is an infinite order rational approximation to $\phi$.

The growth of the denominators $F_{2k}$ as $k\to\infty$ is extremely different from the rational case. Instead of growing linearly with $k$, they grow exponentially: $F_{2k} \approx \frac1{\sqrt5} \phi^{2k}$.

For any $d\neq0\in\Z$, there exist natural numbers $r,s\in\N$ with $r^2-rs-s^2=d$. The recursion \eqref{recursion} preserves this polynomial, and therefore the set
\[
\RR_d := \{(r,s)\in\N^2 \mid r^2-rs-s^2=d\}
\]
is infinite, and for the same reasons, it is an infinite order rational approximation to $\phi$.

In general, the behavior for quadratic irrationals is similar.
\begin{thm}
\label{Quadratic uniqueness}
If $\RR$ is any second order rational approximation to a quadratic irrational $\alpha$ with $\gamma_1=0$, then $\RR$ is actually an infinite order rational approximation, and there exist $a,b,c,d\in\Z$ such that
\[
a r^2+b rs + c s^2 = d
\]
for all but finitely many $(r,s)\in\RR$.
\end{thm}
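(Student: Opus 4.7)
The plan is to extract the quadratic identity by plugging the asymptotic expansion into the minimal polynomial of $\alpha$, and then invert this identity to recover all higher-order terms for free.

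Since $\alpha$ is a quadratic irrational there exist integers $A,B,C$ with $A\neq 0$ and $A\alpha^2+B\alpha+C=0$. The hypothesis that $\RR$ is a second order rational approximation with $\gamma_1=0$ says that there is a real number $\gamma_2$ with
\[
r = \alpha s + \gamma_2 s^{-1} + o(s^{-1})
\]
as $\abs r+\abs s\to\infty$ along $\RR$. First I would substitute this into the homogeneous form $Ar^2+Brs+Cs^2$ and expand. The $s^2$ coefficient vanishes by the defining relation $A\alpha^2+B\alpha+C=0$, and the only surviving $\Or(1)$ contribution comes from the cross terms $2A\alpha s\cdot(\gamma_2 s^{-1})$ and $Bs\cdot(\gamma_2 s^{-1})$. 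Thus
\[
Ar^2+Brs+Cs^2 = (2A\alpha+B)\gamma_2 + o(1).
\]

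Now the key observation: the left-hand side is an integer for every $(r,s)\in\RR$, while the right-hand side tends to the fixed real constant $(2A\alpha+B)\gamma_2$. An integer-valued sequence that converges must be eventually constant; denote the eventual value by $d\in\Z$. Then $Ar^2+Brs+Cs^2=d$ for all but finitely many $(r,s)\in\RR$, which is the claimed identity (with $a=A$, $b=B$, $c=C$). As a byproduct, $(2A\alpha+B)\gamma_2=d$, which also determines $\gamma_2$ uniquely (note $2A\alpha+B\neq0$, since otherwise $\alpha$ would be rational).

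For the infinite order assertion, I would solve the identity $Ar^2+Brs+Cs^2=d$ for $r$ in terms of $s$. Since the two roots of $AX^2+BsX+(Cs^2-d)=0$ are
\[
\frac{-Bs\pm\sqrt{(B^2-4AC)s^2+4Ad}}{2A},
\]
and $r/s\to\alpha$ forces a fixed choice of sign for all sufficiently large $s$, we can write
\[
r = -\frac{B}{2A}s + \frac{\varepsilon\sqrt{B^2-4AC}}{2A}\,s\sqrt{1+\frac{4Ad}{(B^2-4AC)s^2}}
\]
for some $\varepsilon\in\{+1,-1\}$. Expanding the square root by the binomial series (which converges for $s$ large enough) gives a convergent Laurent series in $s^{-1}$ with leading term $\alpha s$, and in particular an asymptotic expansion of $r/s$ in powers of $s^{-1}$ to every order. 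This shows $\RR$ is an infinite order rational approximation.

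The main obstacle is the integer-trapping step: one must confirm that $(2A\alpha+B)\gamma_2$ really is a fixed number (not dependent on $(r,s)$) and that the $o(1)$ error in the quadratic identity is genuinely uniform in the elements of $\RR$; once that is in hand, the rest is algebraic and routine.
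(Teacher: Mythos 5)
Your proposal is correct and follows essentially the same approach as the paper: substitute the order-two expansion with $\gamma_1=0$ into the homogeneous form $ar^2+brs+cs^2$ derived from the minimal polynomial of $\alpha$, use the fact that a convergent integer sequence is eventually constant to trap $d=(2a\alpha+b)\gamma_2\in\Z$, and then solve the resulting quadratic for $r$ in terms of $s$ and expand the square root as a Laurent series to obtain the infinite order conclusion. The ``obstacle'' you flag at the end is not one: $\gamma_2$ is a single fixed real number built into Definition~\ref{Rational definition}, and the $o(1)$ error is by definition taken as $\abs r+\abs s\to\infty$ along $\RR$, so the integer-trapping step is immediate.
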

\begin{proof}
Being a quadratic irrational means that there exist $a,b,c\in\Z$ such that $0 = a\alpha^2+b\alpha + c$. Inserting $(r,s)\in\RR$ into the corresponding homogeneous polynomial gives
\begin{align*}
a r^2 + b rs + c s^2 & = a (\alpha s + \gamma_2 s^{-1}+o(s^{-1}))^2 + b (\alpha s + \gamma_2 s^{-1}+o(s^{-1}))s + c s^2\\
&= (2a \alpha+b)\gamma_2 + o(1) .
\end{align*}
Since the left side is always an integer, this implies that $\Norm{(2a \alpha+b)\gamma_2} = o(1)$, but since this is a constant, that implies that $d :=(2a \alpha+b)\gamma_2 \in \Z$. Now, the integers $a r^2 + b rs + c s^2 - d$ converge to $0$ as $\abs r + \abs s\to\infty$, which means that they must almost all equal $0$.

This shows in particular that for large enough $s$
\[
r = \frac{-bs\pm\sqrt{(b^2-4ac)s^2+4ad}}{2a},
\]
where the sign is chosen such that $\alpha = \frac{-b\pm\sqrt{b^2-4ac}}{2a}$. Then we have that
\[\frac{r}{s} = \frac{-b\pm\sqrt{b^2-4ac}\sqrt{1+\frac{4ad}{s}}}{2a},\]
and expanding $\sqrt{1+\frac{4ad}{s}}$ as a power series in $4ad/s$ thus exhibits that $\RR$ is an infinite order rational approximation to $\alpha$.
\end{proof}

\begin{thm}
Every quadratic irrational real number $\alpha$ has an infinite order approximation with denominators which grow at most exponentially. 
\end{thm}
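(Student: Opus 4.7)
The plan is to construct $\RR$ as a single orbit of an integer matrix of infinite order preserving the binary quadratic form attached to $\alpha$. Let $a\alpha^2 + b\alpha + c = 0$ be the integer minimal polynomial of $\alpha$ (with $a > 0$ and $\gcd(a,b,c) = 1$), let $\bar\alpha$ be the Galois conjugate, and let $D = b^2 - 4ac > 0$ be the non-square discriminant. Set
\[
Q(r,s) := ar^2 + brs + cs^2 = a(r-\alpha s)(r-\bar\alpha s).
\]
By Dirichlet's unit theorem (equivalently, the classical solvability of Pell's equation), the order $\Z[a\alpha] \subset \Q(\alpha)$ contains a unit $\lambda > 1$ of norm $+1$ (square the fundamental unit if necessary). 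Writing $\lambda = t + a\rho\alpha$ with $t,\rho \in \Z$, the matrix
\[
M := \begin{pmatrix} t-b\rho & -c\rho \\ a\rho & t \end{pmatrix}
\]
lies in $SL_2(\Z)$ (the identity $\det M = 1$ is precisely $N(\lambda) = 1$), has eigenvectors $(\alpha,1)^T$ and $(\bar\alpha,1)^T$ with eigenvalues $\lambda$ and $\lambda^{-1}$, and preserves $Q$, since $Q$ is a constant multiple of the product of the two eigen-linear-forms $r - \alpha s$ and $r - \bar\alpha s$ which $M$ rescales by $\lambda$ and $\lambda^{-1}$ respectively.

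Next, pick a starting pair $(r_0,s_0) \in \Z^2$ whose ratio $r_0/s_0$ is closer to $\alpha$ than to $\bar\alpha$ --- any sufficiently late principal convergent of $\alpha$ will do --- and define $(r_n,s_n) := M^n(r_0,s_0)$. Decomposing $(r_0,s_0) = C_1(\alpha,1) + C_2(\bar\alpha,1)$ in the eigenbasis, the coefficient $C_1$ is nonzero because no nonzero integer vector is proportional to $(\bar\alpha,1)$. Therefore
\[
s_n = \lambda^n C_1 + \lambda^{-n} C_2 \sim C_1 \lambda^n,
\]
exhibiting the required exponential growth of denominators, while $r_n/s_n \to \alpha$ with error $O(\lambda^{-2n})$. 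The $M$-invariance of $Q$ yields $Q(r_n,s_n) = Q(r_0,s_0) =: d$, a fixed integer which is nonzero because $\alpha, \bar\alpha \notin \Q$.

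Finally, the endgame of the proof of Theorem~\ref{Quadratic uniqueness} applies verbatim: solving $ar^2 + brs + cs^2 = d$ on the branch with $r/s \to \alpha$ gives $r = \bigl(-bs + \sqrt{Ds^2 + 4ad}\bigr)/(2a)$, and the binomial expansion of $\sqrt{1 + 4ad/(Ds^2)}$ exhibits $r$ as a convergent Laurent series in $s^{-1}$ with leading term $\alpha s$. Hence $\RR := \{(r_n,s_n) \mid n \ge 0\}$ is an infinite order rational approximation to $\alpha$ whose denominators grow at most exponentially, as required. The only substantial ingredient is the existence of the automorphism $M$, which reduces directly to Dirichlet's unit theorem for real quadratic orders; modulo that classical input the argument is a short linear-algebra computation plus an appeal to Theorem~\ref{Quadratic uniqueness}.
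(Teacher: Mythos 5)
Your proof is correct, but it takes a genuinely different route from the paper's. The paper exploits the eventual periodicity of the continued-fraction expansion of $\alpha$: it takes the subsequence $(p_{K+2kL}, q_{K+2kL})$ of principal convergents sampled at twice the period, computes the coefficient $\gamma_2$ explicitly from \eqref{cffact3} and \eqref{cffact4}, verifies that this set is a second-order approximation with $\gamma_1=0$, and then invokes Theorem~\ref{Quadratic uniqueness} to promote it to infinite order; exponential growth of the $q_n$ follows from the boundedness of the partial quotients. You instead work with automorphisms of the binary quadratic form $Q(r,s)=ar^2+brs+cs^2$: Dirichlet's unit theorem (equivalently Pell's equation) produces a norm-one unit $\lambda>1$ in the order $\Z[a\alpha]$, which gives a matrix $M\in\mathrm{SL}_2(\Z)$ preserving $Q$, with eigenvectors $(\alpha,1)^T$, $(\bar\alpha,1)^T$ and eigenvalues $\lambda$, $\lambda^{-1}$; iterating $M$ on an integer starting vector then gives an orbit on which $Q$ is a fixed nonzero integer $d$, and $|s_n|\sim |C_1|\lambda^n$. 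This is in effect a generalization of the paper's golden-ratio example, where the recursion $(r,s)\mapsto(2r+s,r+s)$ plays the role of your $M$ and $r^2-rs-s^2=d$ plays the role of $Q=d$. Both proofs terminate in the same algebraic step: solve $Q(r,s)=d$ for $r$ on the $\alpha$-branch and expand the square root as a Laurent series in $s^{-1}$. The paper's route identifies $\gamma_2$ explicitly and sits naturally in the continued-fraction and Ostrowski framework set up earlier; your route is more structural, exhibiting the infinite-order approximation as an orbit of a hyperbolic element of $\mathrm{SL}_2(\Z)$, and sidesteps Theorem~\ref{Quadratic uniqueness} as a hypothesis (you establish $Q(r_n,s_n)=d$ directly and only reuse its endgame). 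One small point worth making explicit: you need $s_n$ to eventually have a constant sign so that a single branch of the Laurent expansion of $r$ in $s^{-1}$ applies to all but finitely many elements of $\RR$; this follows from $C_1\neq0$ (and your choice of starting vector), so the argument stands.
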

\begin{proof}
A quadratic irrational real number has an eventually periodic continued fraction expansion (see \cite[Theorem III.1.2]{RockettSzusz1992}). Therefore we write
\[
\alpha=[0;a_1,\ldots ,a_K,\overline{a_{K+1},\ldots ,a_{K+L}}],
\]
for some integers $K$ and $L$, and let
\[
\gamma_2:=\frac{(-1)^{K+1}}{\zeta_{K+1}+[0;\overline{a_{K+L},\ldots ,a_{K+1}}]}.
\]
Now by \eqref{cffact3} and \eqref{cffact4}, for any positive integer $k$,
\begin{multline}
\left|\alpha-\frac{p_{K+2kL}}{q_{K+2kL}}+\frac{\gamma_2}{q_{K+2kL}^2}\right|\\
=\left|\frac{(-1)^{K+2kL}}{q_{K+2kL}^2(\zeta_{K+2kL+1}+\xi_{K+2kL})}+\frac{(-1)^{K+1}}{q_{K+2kL}^2(\zeta_{K+1}+[0;\overline{a_{K+L},\ldots ,a_{K+1}}])}\right|\\
=\frac{1}{q_{K+2kL}^2}\left|\frac{1}{\zeta_{K+1}+[0;a_{K+2kL},\ldots ,a_1]}-\frac{1}{\zeta_{K+1}+[0;\overline{a_{K+L},\ldots ,a_{K+1}}]}\right|\label{quad est1}
\end{multline}
Now since
\[\lim_{k\rar\infty}[0;a_{K+2kL},\ldots ,a_1]=[0;\overline{a_{K+L},\ldots ,a_{K+1}}],\]
this proves that the quantity in (\ref{quad est1}) is $o(q_{K+2kL}^{-2})$ as $k\rar\infty$. In other words the set 
\[
\RR=\{(p_{K+2kL},q_{K+2kL})\mid k\in\N\}
\] 
is an order $2$ rational approximation to $\alpha$, with $\gamma_1=0$ and $\gamma_2$ as above. By Theorem~\ref{Quadratic uniqueness}, it is actually an infinite order approximation.

Since the continued fraction for $\alpha$ is periodic, there is a constant $M$ such that $a_n\leq M$ for all $n$. By \eqref{cffact1} we have that
\[
q_{K+2kL}= \Or( M^{K+2kL}),
\]
which verifies that the denominators in $\RR$ grow no more than exponentially.
\end{proof}
Note that in the proof of this theorem we used (\ref{cffact3}), which corresponded in our situation with taking $\gamma=0$ in Corollary \ref{nalph-gam formula2}. It may be the case that using the full generality of Corollary \ref{nalph-gam formula2} could produce infinite order approximations to other real numbers that grow more slowly than those constructed in the proof of Theorem \ref{arbitrary approx thm}.

\section{Further Questions}
\label{Conclusions}
This was only a beginning at investigating this topic. Although we have shown that infinite order rational approximation always exist, there are other basic questions that remain to be answered.

\subsection{Growth}
Given a number $\alpha$, how fast do its rational approximations grow? That is, if an order $N$ rational approximation to $\alpha$ is arranged into a sequence, then how fast must the numbers grow?

We have seen that a rational number has infinite order approximations that grow linearly and a quadratic irrational has approximations that grow exponentially. Does the existence of a linearly or exponentially growing approximation imply that $\alpha$ is rational or quadratic?

\subsection{Uniqueness}
To what extent are the expansion coefficients $\gamma_1,\gamma_2,\dots$ restricted by $\alpha$? 

Corollary~\ref{Existence} shows that there always exists at least one approximation with $\gamma_j=0$ for $j\geq2$. Proposition~\ref{Rational uniqueness} shows that, for $\alpha$ rational, the expansion must be of this form, and $\gamma_1$ is greatly restricted. Theorem~\ref{Quadratic uniqueness} shows that if $\alpha$ is quadratic and $\gamma_1=0$, then the other expansion coefficients are determined by a single integer.

\subsection{Generalization}
Proposition \ref{Toeplitz2} and Theorem~\ref{Asymptotic} actually motivate a more general definition. Rather than considering only a single real number, we could take a point  $\alpha\in \R P^n$ and look for a sequence in $\Z^{n+1}$ that converges modulo $\R^\times$ to $\alpha$. All of the questions about rational approximations can be asked again in this more general context.

\end{document}